\theoremstyle{plain}
\newtheorem{thm}{Theorem}[section]
\newtheorem{cor}[thm]{Corollary}
\newtheorem{lem}[thm]{Lemma}
\newtheorem{prop}[thm]{Proposition}
\theoremstyle{definition}
\newtheorem{defn}[thm]{Definition}
\newtheorem{ex}[thm]{Example}
\theoremstyle{remark}
\newtheorem{rem}{Remark}
\newtheorem{question}{Question}
\def\cocoa{{\hbox{\rm C\kern-.13em o\kern-.07em C\kern-.13em o\kern-.15em A}}}
\def\Ker{{\rm Ker}}
\def\reg{{\rm reg}}
\def\projdim{{\rm projdim}}
\def\C{{\mathcal C}}
\def\Index{\mathrm{index}}
\def\implies{\ifmmode\Rightarrow \else
        \unskip${}\Rightarrow{}$\ignorespaces\fi}
\def\Tor{\mathrm{Tor}}
\begin{document}

\title[reduction and chordality]{Stability of Betti numbers under reduction processes: towards  chordality of clutters}
\author[M. Bigdeli, A. A. Yazdan Pour, R. Zaare-Nahandi]{mina bigdeli, {ali akbar} {yazdan pour}, rashid zaare-nahandi}
\address{department of mathematics\\ institute for advanced studies in basic sciences (IASBS)\\ p.o.box 45195-1159 \\ zanjan, iran}
\email{m.bigdelie@iasbs.ac.ir, yazdan@iasbs.ac.ir, rashidzn@iasbs.ac.ir}

\subjclass[2010]{Primary 13D02, 13F55; Secondary 05E45, 05C65.}
\keywords{Betti number, Linear resolution, Regularity, index, Chordal clutter}

\begin{abstract}
For a given clutter $\mathcal{C}$, let $I:=I \left( \bar{\mathcal{C}} \right)$ be the circuit ideal in the polynomial ring $S$. In this paper, we show that the Betti numbers of $I$ and $I + \left( \textbf{x}_F \right)$ are the same in their non-linear strands, for some suitable $F \in \mathcal{C}$. Motivated by this result, we introduce a class of clutters that we call  chordal. This class, is a natural extension of the class of chordal graphs and has the nice property that the circuit ideal associated to any member of this class has a linear resolution over any field. Finally we compare this class with all known families of clutters which generalize the notion of chordality, and show that our class  contains several important previously defined  classes of chordal clutters. We also show that in comparison with others, this class is possibly the best approximation to the class of $d$-uniform clutters with linear resolution over any field.


\end{abstract}
\maketitle

\section*{introduction}
Square-free monomial ideals are in strong connection to topology and combinatorics. There are at least two approaches to investigate these ideals in terms of topology or combinatorics. One approach is to associate a simplicial complex to a given square-free monomial ideal $I$, whose  faces come from square-free monomials which do not belong to $I$. Another approach is to associate a clutter to $I$ whose circuits come from the minimal generators of $I$. The main goal in both cases is to obtain algebraic properties of $I$ via combinatorial or topological properties of associated objects. One of the highlighted result on this subject is Fr\"oberg's theorem.

R. Fr\"oberg in $1990$ showed that, the edge ideal of a graph $G$ has a linear resolution if and only if the complement graph $\bar{G}$ is chordal \cite{Fr}. In particular, for a square-free monomial ideal generated in degree $2$, the problem of having linear resolution depends only on the shape of the associated graph, and does not depend on the characteristic of the base field. This is not the case for square-free monomial ideals generated in degree $d>2$. The ideal associated to a triangulation of the projective plane, is a classical example of a square-free monomial ideal generated in degree $3$ whose resolution depends on the characteristic of the base field (see e.g. \cite[Section 4]{Katzman}). So it is too much to expect a combinatorial characterization (as in Fr\"oberg's theorem) for arbitrary square-free monomial ideals with linear resolution. However, it is reasonable to ask, if one may find such a characterization for (square-free) monomial ideals with linear resolution over any field. It is worth to say that, via Alexander duality, this problem is equivalent to characterization  of all simplicial complexes which are Cohen-Macaulay over any field (c.f. \cite[Theorem 3]{Eagon-Reiner}). As a partial result on this subject, in \cite{Emtander, HaVanTuyl, VanTuyl-Villarreal, Woodroofe}, the authors defined several generalizations of chordality to higher dimensions, and they showed that the ideal associated to their classes, have a linear resolution over any field. However, it is not so difficult to give a counterexample for the other direction. On the other hand, in \cite{ConnonFaridi}, the authors made the attempt to prove the other direction, by showing that every square-free monomial ideal with linear resolution over any field, comes from a chorded simplicial complex, where in that paper chorded is defined pretty technically. Yet, the authors show by an example that not any ideal admitting a linear resolution over any field, need to be chorded in their sense \cite[Example 7.2]{ConnonFaridi}.

The main aim of this paper is twofold. First we show that, for a given square-free monomial ideal $I$, we may add (remove) some generators to (from) $I$ in a way that, the corresponding non-linear strands do not change under this process. Then, motivated by this result, we introduce a class of clutters whose associated ideal of any member of this class has a linear resolution over any field. The advantage of this definition is that, this class contains other known families of clutters with this property, and at the moment, we don't know of any counterexample for the other direction (see Question~\ref{Characterization question}). 

The paper is organized as follows:
In the first section, we present the background material. This involves some preliminaries on graded modules and Betti numbers together with some basic notions of combinatorics. Then, in Section~\ref{Section Stability}, we state one of the main theorems of this paper (Theorem~\ref{main}). Indeed, with the required preparations, we show that, if $I$ is a square-free monomial ideal corresponded to a clutter $\C$ and $F \in \C$ is chosen appropriately, then the ideals $I+ \left( \textbf{x}_F \right)$ and $I$ share the same Betti numbers in their non-linear strands. In Section~\ref{Section Chordal}, we introduce the class $\mathfrak{C}_d$ of chordal clutters. It is shown that, for any member of $\mathfrak{C}_d$, the associated ideal has a linear resolution over any field. Then we show that, this class contains other families of chordal clutters as defined in \cite{Emtander, HaVanTuyl, VanTuyl-Villarreal, Woodroofe}. We close the paper by showing that, unlike in the graph case, it is not true that for arbitrary element of $\mathfrak{C}_d$, the associated ideal has linear quotients. A counterexample for  the last assertion, comes from  a triangulation of the dunce hat.


\section{preliminaries}

\subsection*{Algebraic backgrounds}

Throughout this paper, $S=K[x_1, \ldots, x_n]$ denotes the polynomial ring over a field $K$ with the standard grading (i.e. $\deg(x_i)=1$). Let $M \neq 0$ be a finitely generated graded $S$-module and
$$
 \cdots \to F_2 \to F_1 \to F_0 \to M \to 0
$$
a graded minimal free resolution of $M$ with $F_i = \oplus_j S(-j)^{\beta^K_{i,j}}$ for all $i$.

The numbers $\beta_{i,j}^K(M) = \dim_K \mbox{Tor}^S_i(K,M)_j$ are called the \textit{graded Betti numbers}
of $M$ and
\begin{align*}
\projdim (M) &= \sup\{i \colon \quad \mbox{Tor}^S_i(M,K) \neq 0\} \\
& = \max\{i \colon \quad  \beta^K_{i, i+j}(M) \neq 0 \text{ for some } j \}
\end{align*}
is called the \textit{projective dimension} of $M$. For simplicity, in this paper, we fix a field $K$ and we write simply $\beta_{i,j}$ instead of $\beta_{i,j}^K$.

The \textit{Castelnuovo-Mumford regularity} of $M \neq 0$, $\mathrm{reg}(M)$, is given by
$$
\mbox{reg}(M) = \sup\{j - i : \quad \beta_{i,j}(M) \neq 0\}.
$$
The \textit{initial degree} of $M$, $\mathrm{indeg}(M)$, is given by
$$
\mbox{indeg}(M) = \inf\{i : \quad M_i \neq 0\}.
$$
We say that a finitely generated graded $S$-module $M \neq 0$ has a \textit{$d$-linear resolution}, if its regularity is equal to $d = \mbox{indeg}(M)$. An important class of graded modules with linear resolution, is the class of modules which are generated in the same degree and have linear quotients \cite{HHBook}. Recall that $M$ is said to have \textit{linear quotients}, if $M$ has an ordered set of minimal generators $\left\{ m_1, \ldots, m_r \right\}$ such that the colon ideals $\left( m_1, \ldots, m_{i-1} \right) \colon m_i$ are generated by linear forms, for $i= 2, \ldots, r$.

In this paper, we concentrate on homogeneous ideals of $S$, in particular monomial ideals. 
An invariant of a homogeneous ideal in the polynomial ring, is the Green-Lazarsfeld index which measures the number of linear steps in the graded minimal free resolution of an ideal. The ideal $I \subset S$ is called \textit{$r$ steps linear}, if $I$ has a linear resolution up to homological degree $r$, in other words, if $\beta_{i,i+j}(I)=0$ for all pairs $(i,j)$ with  $0\leq i\leq r$ and $j> \mbox{indeg}(I)$. The number
\[
\mathrm{index}(I)=\sup\{r\colon \quad I \text{ is } r \text{ steps linear} \} + 1
\]
is called the  \textit{index} of $I$. In particular, $I$ has a linear resolution if and only if $\Index(I)=\infty$.

\subsection*{Clutters and circuit ideals}  Let us introduce some notations and terminologies which concerns about combinatorial commutative algebra. Let  $\left[ n \right]= \left\{ 1, \ldots, n \right\}$.

\begin{defn}[Clutter] \label{SC} 
A \textit{clutter} $\C$ with vertex set $[n]$ is a collection of subsets of $[n]$, called \textit{circuits} of $\C$, such that if $F_1$ and $F_2$ are distinct circuits, then $F_1 \nsubseteq F_2$.
A \textit{$d$-circuit} is a circuit consisting of exactly $d$ vertices, and a clutter is called \textit{$d$-uniform}, if every circuit has $d$ vertices.
A $(d-1)$-subset $e \subset [n]$ is called a \textit{submaximal circuit} of $\C$, if there exists $F \in \C$ such that $e \subset F$. The set of all submaximal circuits of $\C$ is denoted by ${\rm SC}(\C)$.
\end{defn}

Let $\C$ be a clutter with vertex set $[n]$. For a subset $W \subseteq [n]$, the \textit{induced subclutter} on $W$ is denoted by $\C|_W$ and is defined as follows:
$$\C|_W = \left\{ F \in \C \colon \quad F \subseteq W \right\}.$$
Also, for a non-empty clutter $\C$ with vertex set $[n]$, we define the ideal $I \left( \C \right)$, as follows:
$$I(\C) = \left(  \textbf{x}_T \colon \quad T \in \C \right),$$
where $\textbf{x}_T=x_{i_1}\cdots x_{i_t}$ for $T=\{i_1,\ldots,i_t\}$, and we define $I(\varnothing) = 0$.

Let $n$, $d$ be positive integers. For $n \geq d$, we define $\C_{n,d}$, the \textit{complete $d$-uniform clutter} on $[n]$, as follows:
$$\C_{n,d} = \left\{ F \subset [n] \colon \quad |F|=d \right\}.$$
In the case that $n<d$, we let $\C_{n,d}$ be some isolated points. It is well-known that, for $n \geq d$ the ideal $I \left( \C_{n,d} \right)$ has a $d$-linear resolution (see e.g. \cite[Example 2.12]{MNYZ}). 

If $\C$ is a $d$-uniform clutter on $[n]$, we define $\bar{\C}$, the \textit{complement} of $\C$, to be
\begin{equation*}
\bar{\C} = \C_{n,d} \setminus \C = \{F \subset [n] \colon \quad |F|=d, \,F \notin \C\}.
\end{equation*}

Frequently in this paper, we take a $d$-uniform clutter $\C \neq \C_{n,d}$ with vertex set $[n]$ and consider the square-free monomial ideal $I=I(\bar{\C})$ in the polynomial ring $S=K[x_1, \ldots, x_n]$. The ideal $I= I \left( \bar{\C} \right)$ is called the \textit{circuit ideal} of $\C$.

\begin{defn}
Let $\C$ be a $d$-uniform clutter on $[n]$. A subset $V \subset [n]$ is called a \textit{clique} in $\C$, if all $d$-subsets of $V$ belong to $\C$. Note that a subset of $[n]$ with less than $d$ elements is supposed to be a clique.
\end{defn}

For any $(d-1)$-subset $e$ of $[n]$, let
\begin{equation*}
{N}_{\C} \left[ e \right] = e \cup \lbrace c \in \left[ n \right] \colon \quad e \cup \lbrace c \rbrace \in \mathcal{C} \rbrace.
\end{equation*}
We call ${N}_{\C} \left[ e \right]$ the \textit{closed neighborhood} of $e$ in $\mathcal{C}$. We say that $e$ is \textit{simplicial} in $\C$, if ${N}_{\C} \left[ e \right]$ is a clique in $\mathcal{C}$. Let us denote by $\mathrm{Simp} \left( \mathcal{C} \right)$, the set of all simplicial elements of $\mathcal{C}$. More generally, for a subset $A \subset [n]$ with $|A| < d$, let 
\[
N_\C[A] = A \cup \left\{ c \in [n] \colon \quad A \cup \{c\} \subseteq F, \text{ for some } F \in \C \right\}.
\]

\begin{ex}
Figure~\ref{1} displays a $3$-uniform clutter $\mathcal{C}$ whose circuits are
$$
 \{ 1,2,3 \}, \{ 1,2,4 \}, \{ 1,3,4 \}, \{ 2,3,4 \}, \{2,3,6 \},
 \{ 2,5,6 \}, \{ 2,5,7 \}, \{2,6,7\}, \{ 5,6,7 \}.
 $$
In this clutter, $\left\{ 2,3 \right\}$ and $\left\{ 2,6 \right\}$ are not simplicial in $\C$, but all the other submaximal circuits of the set $\{1, \ldots, 7\}$ are simplicial in $\C$.
\begin{figure}[ht!]
\begin{tikzpicture}[line cap=round,line join=round,>=triangle 45,x=.83cm,y=.83cm]
\clip(4.18,2.99) rectangle (10.87,6.71);
\fill[fill=black,fill opacity=0.1] (6.32,5.72) -- (7,3.64) -- (5.28,4) -- cycle;
\fill[fill=black,fill opacity=0.1] (6.32,5.72) -- (7.66,4.38) -- (7,3.64) -- cycle;
\fill[fill=black,fill opacity=0.1] (7.66,4.38) -- (8.78,5.81) -- (8.33,3.66) -- cycle;
\fill[fill=black,fill opacity=0.1] (8.78,5.81) -- (10,4) -- (8.33,3.66) -- cycle;
\fill[fill=black,fill opacity=0.1] (7,3.64) -- (7.66,4.38) -- (8.33,3.66) -- cycle;
\draw (6.32,5.72)-- (7,3.64);
\draw (7,3.64)-- (5.28,4);
\draw (5.28,4)-- (6.32,5.72);
\draw  (6.32,5.72)-- (7.66,4.38);
\draw (7.66,4.38)-- (7,3.64);
\draw (7,3.64)-- (6.32,5.72);
\draw [dash pattern=on 2pt off 2pt] (5.28,4)-- (7.66,4.38);
\draw (6.05,6.31) node[anchor=north west] {\begin{scriptsize}1\end{scriptsize}};
\draw (7.4,4.31) node[anchor=north west] {\begin{scriptsize}2\end{scriptsize}};
\draw (6.73,3.63) node[anchor=north west] {\begin{scriptsize}3\end{scriptsize}};
\draw (4.8,4.33) node[anchor=north west] {\begin{scriptsize}4\end{scriptsize}};
\draw (7.66,4.38)-- (8.78,5.81);
\draw (8.78,5.81)-- (8.33,3.66);
\draw (8.33,3.66)-- (7.66,4.38);
\draw (8.78,5.81)-- (10,4);
\draw (10,4)-- (8.33,3.66);
\draw (8.33,3.66)-- (8.78,5.81);
\draw [dash pattern=on 2pt off 2pt] (7.66,4.38)-- (10,4);
\draw (7,3.64)-- (7.66,4.38);
\draw (7.66,4.38)-- (8.33,3.66);
\draw (8.33,3.66)-- (7,3.64);
\draw (8.47,6.35) node[anchor=north west] {\begin{scriptsize}5\end{scriptsize}};
\draw (8.12,3.67) node[anchor=north west] {\begin{scriptsize}6\end{scriptsize}};
\draw (9.93,4.29) node[anchor=north west] {\begin{scriptsize}7\end{scriptsize}};
\begin{scriptsize}
\fill [color=black] (6.32,5.72) circle (1.3pt);
\fill [color=black] (7,3.64) circle (1.3pt);
\fill [color=black] (5.28,4) circle (1.3pt);
\fill [color=black] (7.66,4.38) circle (1.3pt);
\fill [color=black] (8.78,5.81) circle (1.3pt);
\fill [color=black] (8.33,3.66) circle (1.3pt);
\fill [color=black] (10,4) circle (1.3pt);
\end{scriptsize}
\end{tikzpicture}
\caption{A $3$-uniform clutter}
\label{1}
\end{figure}
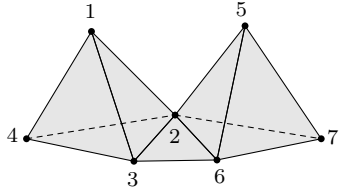
\end{ex}

\begin{defn}
Let $\mathcal{C}$ be a $d$-uniform clutter and let $e$ be a $(d-1)$-subset of $[n]$.  By $\mathcal{C} \setminus e$ we mean the $d$-uniform clutter 
$$ \left\{ F \colon \quad F \in \mathcal{C}, \ e \nsubseteq F \right\}.$$
It is called the \textit{deletion} of $e$ from $\mathcal{C}$. In the case that $e$ is not a submaximal circuit of $\C$, we have $\mathcal{C} \setminus e = \C$.
\end{defn}

\section{stability of non-linear strands under removing simplicial elements} \label{Section Stability}

The goal of this section is to show that deletion of simplicial elements of a $d$-uniform clutter does not change the Betti numbers of the corresponding non-linear strands. Indeed, the main goal of this section is to prove the following theorem.

\begin{thm} \label{main}
Let $\mathcal{C}$ be a $d$-uniform clutter and $e \in \mathrm{Simp}(\C)$ a simplicial element of $\mathcal{C}$. Let $A \subseteq \left\{ F \in \C \colon \quad e \subset F \right\}$  and  $\mathcal{D}=\mathcal{C} \setminus A$. Then,
\begin{equation*}
 \beta_{i,i+j} \left( I \left( \bar{\mathcal{C}} \right) \right) = \beta_{i,i+j} \left( I \left( \bar{\mathcal{D}} \right) \right) ,
\end{equation*}
for all $i$ and all $j>d$. Consequently,
\begin{itemize}
	\item[\rm (a)] $\reg \left( I \left( \bar{\mathcal{C}} \right)\right) = \reg \left( I \left( \bar{\mathcal{D}} \right) \right)$;  
	\item[\rm (b)] $\Index \left( I \left( \bar{\mathcal{C}} \right) \right) = \Index \left( I \left( \bar{\mathcal{D}} \right) \right)$;
	\item[\rm (c)] $\projdim \left( I \left( \bar{\mathcal{C}} \right) \right) \leq \projdim \left( I\left( \bar{\mathcal{D}} \right) \right)$.
\end{itemize}
\end{thm}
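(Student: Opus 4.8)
The plan is to exploit that $\bar{\mathcal D}=\bar\C\cup A$, so that passing from $\C$ to $\mathcal D$ only \emph{adds} generators: $I(\bar{\mathcal D})=I(\bar\C)+(\mathbf x_F\colon F\in A)$. I would first reduce to $|A|=1$. If $A=\{F_1,\dots,F_k\}$ and $\C_t=\C\setminus\{F_1,\dots,F_t\}$, a short check shows $N_{\C_{t-1}}[e]=N_\C[e]\setminus\{c_1,\dots,c_{t-1}\}$ is still a clique in $\C_{t-1}$ (each of its $d$-subsets already lies in the clique $N_\C[e]$ and avoids the deleted circuits), so $e$ stays simplicial at every step and the statement telescopes. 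For a single added generator $u=\mathbf x_{F_0}$ with $F_0=e\cup\{c\}\in\C$, the heart of the argument is the colon computation
\[
I(\bar\C):u=(x_v\colon v\notin N_\C[e]).
\]
For $v\notin N:=N_\C[e]$ the set $e\cup\{v\}$ is a circuit of $\bar\C$ and $\mathbf x_{e\cup\{v\}}:u=x_v$, giving $\supseteq$; conversely any $T\in\bar\C$ cannot sit inside the clique $N$, so $T$ meets $[n]\setminus N$ in some $v$, forcing $x_v\mid \mathbf x_{T\setminus F_0}$ and giving $\subseteq$. Thus $L:=I(\bar\C):u$ is generated by a subset of the variables and $S/L$ has a linear (Koszul) resolution.

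Writing $I=I(\bar\C)$ and $I'=I+(u)$, I would feed this into
\[
0\longrightarrow (S/L)(-d)\xrightarrow{\;\cdot u\;} S/I\longrightarrow S/I'\longrightarrow 0
\]
and read the long exact sequence in $\Tor$. Since $\beta_{p,q}(S/L)=0$ unless $q=p$, the shift $(S/L)(-d)$ contributes to $\Tor_p(\,\cdot\,)_q$ only when $q=p+d$. Translating through $\beta_{i,i+j}(I)=\beta_{i+1,i+j}(S/I)$, both flanking $\Tor$-terms vanish unless $j=d$ or $j=d+1$, so $\beta_{i,i+j}(I)=\beta_{i,i+j}(I')$ comes for free whenever $j\ge d+2$.

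The one delicate strand is $j=d+1$, the first non-linear one, where the flanking term $\beta_{i+1,i+1}(S/L)$ need not vanish and the sequence only yields a surjection together with a connecting map that must be killed; this is where I expect the real work to lie. I would resolve it through Hochster's formula. With $\Delta$ the clique complex of $\C$ and $\Delta'$ that of $\mathcal D=\C\setminus\{F_0\}$, one has $\Delta'=\{G\in\Delta\colon F_0\not\subseteq G\}$ and $\beta_{i,i+j}(I)=\sum_{|W|=i+j}\dim_K\tilde H_{j-2}(\Delta|_W)$. Every face of $\Delta|_W$ absent from $\Delta'|_W$ contains $F_0\supseteq e$, so after a shift by $d-1$ the relative complex is that of the link of $e$; by simpliciality this link is the \emph{full simplex} on $V:=(W\cap N)\setminus e$ and its relevant subcomplex the full simplex on $V\setminus\{c\}$. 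Hence $\tilde H_\bullet(\Delta|_W,\Delta'|_W)$ is the relative homology of a pair of simplices: at most one-dimensional, concentrated in degree $d-1$, and nonzero only when $V=\{c\}$. The long exact sequence of the pair gives $\tilde H_k(\Delta'|_W)\cong\tilde H_k(\Delta|_W)$ for $k\ge d$, and for $k=d-1$ injectivity is automatic, so only surjectivity remains, equivalently injectivity of $\partial\colon\tilde H_{d-1}(\Delta|_W,\Delta'|_W)\to\tilde H_{d-2}(\Delta'|_W)$. I would finish with a \emph{free-facet} argument: the relative generator is $[F_0]$ and $\partial[F_0]=[\partial F_0]$; since $e\cup\{x\}\notin\C$ for $x\notin N$ (simpliciality) while $e\cup\{c\}=F_0\notin\mathcal D$, the facet $e$ of $F_0$ lies in no $d$-face of $\Delta'|_W$, so the coefficient of $e$ in $\partial F_0$ cannot be cancelled and $[\partial F_0]\neq0$. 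Thus $\partial$ is injective and the equality holds at $j=d+1$ as well.

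The consequences then fall out. As both ideals are generated in degree $d$, their regularity equals $\max\bigl(d,\ \sup\{j>d\colon \beta_{i,i+j}\neq0\}\bigr)$, the same for both, giving (a); likewise the index is governed only by the vanishing pattern of the $\beta_{i,i+j}$ with $j>d$, which coincides, giving (b). For (c), the same analysis in homological degree $d-2$ (surjectivity of the comparison map, equivalently the injectivity read off the exact sequence in the linear strand) yields $\beta_{i,i+d}(I(\bar\C))\le\beta_{i,i+d}(I(\bar{\mathcal D}))$; combined with equality in all higher strands and with $\beta_{i,i+j}=0$ for $j<d$, every nonvanishing Betti number of $I(\bar\C)$ forces one of $I(\bar{\mathcal D})$ in the same homological degree, whence $\projdim(I(\bar\C))\le\projdim(I(\bar{\mathcal D}))$.
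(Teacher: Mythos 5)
Your proposal is correct, but it travels a genuinely different road from the paper's. The paper does not peel off circuits one at a time: it observes that $\mathcal{D}\setminus e=\C\setminus e$, applies its key corollary to both $\C$ and $\mathcal{D}$, and so reduces everything to the full deletion $\C'=\C\setminus e$. There it writes $I(\bar{\C'})=I+L$ with $L=\left(x_i\mathbf{x}_e\colon i\notin e\right)$ and proves the single algebraic fact that $L$ and $I\cap L$ both have $d$-linear resolutions (the combinatorial input being exactly the one you use: every $F\in\bar\C$ meets $[n]\setminus N_\C[e]$, so each generator of $I$ is divisible by some $x_v$ with $e\cup\{v\}\in\bar\C$); the Mayer--Vietoris sequence of $0\to I\cap L\to I\oplus L\to I+L\to 0$ then kills all strands $j>d$ uniformly, because $\Tor(I\cap L)$ vanishes beyond the linear strand. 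Your route instead uses the colon ideal $I:\mathbf{x}_{F_0}=(x_v\colon v\notin N_\C[e])$ and the multiplication-by-$u$ sequence, which is cheaper for $j\ge d+2$ but, as you correctly diagnose, leaves a genuinely delicate strand at $j=d+1$ (the linear Tor of $S/(I:u)$ does not vanish); you then dispose of it with Hochster's formula, the observation that $\tilde H_\bullet(\Delta|_W,\Delta'|_W)$ is concentrated in degree $d-1$ and supported only on $W$ with $W\cap N_\C[e]=F_0$, and a free-facet argument showing the connecting map is injective --- all of which checks out, including the telescoping step (each $d$-subset of $N_{\C_{t-1}}[e]$ avoids the deleted circuits because those contain a deleted neighbor). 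What the paper's approach buys is brevity, uniformity across all non-linear strands, and as a by-product the exact value $\projdim(I(\overline{\C\setminus e}))=n-d$; what yours buys is an explicit topological picture of precisely which induced subcomplexes could acquire new homology when a circuit is removed, at the price of the extra relative-homology argument in the first non-linear strand.
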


To  prove Theorem~\ref{main}, we need some preparations and we consider more generally graded ideals. In the next sections, we will introduce several applications of this theorem. Among other applications, Theorem~\ref{main} recovers and extends \cite[Theorem 2.7]{MYZ2} and \cite[Theorem 3.7]{MNYZ}.

\begin{lem} \label{ll}
Let $I, L\subset S$ be   graded ideals generated in degree $d$, and suppose that $L$ has a $d$-linear resolution. Then the following statements are equivalent:
\begin{itemize}
\item[(a)] the natural map $\alpha_{i,j}:\mathrm{Tor}_i(I,K)_{i+j} \to \mathrm{Tor}_i(I+L, K)_{i+j}$ induced by the inclusion $I \hookrightarrow I+L$ is an isomorphism for all $i$ and all  $j>d$;
\item[(b)] $\mathrm{reg} \left( I \cap L \right) \leq d+1$  and the connecting homomorphism
$$
\gamma_{i+1,j}:\mathrm{Tor}_{i+1}(I+L,K)_{(i+1)+j}\to \mathrm{Tor}_i(I\cap L,K)_{i+(j+1)}
$$
is surjective for all $i$ and all $j\geq d$.
\end{itemize}
If the equivalent conditions are satisfied, then $\beta_{i,i+j}(I)=\beta_{i,i+j}(I+L)$ for all $i$ and all $j>d$.
\end{lem}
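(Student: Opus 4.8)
The plan is to feed the Mayer--Vietoris short exact sequence
\[
0 \to I \cap L \to I \oplus L \to I+L \to 0,
\]
whose maps are $x\mapsto(x,-x)$ and $(a,b)\mapsto a+b$, into the functor $\Tor_\bullet(-,K)$ and read off the resulting long exact sequence of graded vector spaces one internal degree at a time. In internal degree $i+j$ the relevant stretch (suppressing the second argument $K$) is
\[
\Tor_{i+1}(I+L)_{i+j} \xrightarrow{\gamma} \Tor_i(I\cap L)_{i+j} \xrightarrow{\rho} \Tor_i(I)_{i+j}\oplus \Tor_i(L)_{i+j} \xrightarrow{\psi} \Tor_i(I+L)_{i+j} \xrightarrow{\gamma'} \Tor_{i-1}(I\cap L)_{i+j}.
\]
The restriction of $\psi$ to the first summand is exactly $\alpha_{i,j}$, and the connecting maps are the $\gamma_{\ast,\ast}$ of the statement: each preserves internal degree and lowers homological degree by one, and $(i+1)+j=i+(j+1)$ matches the indexing in (b). The structural input I would exploit throughout is that $L$ has a $d$-linear resolution, so $\Tor_k(L)_{i+j}=\beta_{k,i+j}(L)$ survives only at the single homological degree $k=i+j-d$. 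Thus in internal degree $i+j$ the terms $\Tor_{i-1}(L)$, $\Tor_i(L)$, $\Tor_{i+1}(L)$ vanish unless $j$ equals $d-1$, $d$, $d+1$ respectively; in particular for $j>d+1$ all three are zero and the sequence reduces to maps between $I$-, $(I\cap L)$- and $(I+L)$-terms only.

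For (a) $\Rightarrow$ (b) I would first extract the regularity bound. Fixing internal degree $i+j$ with $j\ge d+2$, all nearby $L$-terms vanish, so the five-term piece
\[
\Tor_{i+1}(I)_{i+j} \xrightarrow{\alpha_{i+1,j-1}} \Tor_{i+1}(I+L)_{i+j} \xrightarrow{\gamma} \Tor_i(I\cap L)_{i+j} \xrightarrow{\rho} \Tor_i(I)_{i+j} \xrightarrow{\alpha_{i,j}} \Tor_i(I+L)_{i+j}
\]
is exact with genuine $\alpha$'s at both ends. Surjectivity of $\alpha_{i+1,j-1}$ (an isomorphism since $j-1>d$) kills $\gamma$, and injectivity of $\alpha_{i,j}$ kills $\rho$; sandwiched between, $\Tor_i(I\cap L)_{i+j}$ must vanish, giving $\reg(I\cap L)\le d+1$. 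For the surjectivity of $\gamma_{i+1,j}$ with $j\ge d$, I would move up to internal degree $m=i+j+1$; there $\Tor_i(L)_m=0$ (as $j+1>d$), so $\psi$ restricts to $\alpha_{i,j+1}$, which is injective by (a). Hence $\rho=0$ in degree $m$, so the preceding connecting map $\gamma_{i+1,j}$ is onto.

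For (b) $\Rightarrow$ (a) I would fix $j>d$ and split into two ranges. When $j\ge d+2$ the hypothesis $\reg(I\cap L)\le d+1$ forces both $\Tor_i(I\cap L)_{i+j}$ and $\Tor_{i-1}(I\cap L)_{i+j}$ to vanish, so exactness alone collapses $\alpha_{i,j}$ to an isomorphism. The delicate case is the boundary $j=d+1$: injectivity of $\alpha_{i,d+1}$ follows because surjectivity of the connecting map $\gamma_{i+1,d}$ (the $j=d$ instance of (b)) forces $\rho=0$, while surjectivity of $\alpha_{i,d+1}$ follows because the target $\Tor_{i-1}(I\cap L)_{i+d+1}$ of the next connecting map sits in $j$-value $d+2$ and hence vanishes by the regularity bound. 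Once (a) is in hand the final equality is immediate: an isomorphism of finite-dimensional $K$-vector spaces equates dimensions, so $\beta_{i,i+j}(I)=\beta_{i,i+j}(I+L)$ for all $i$ and all $j>d$. I expect the main obstacle to be precisely this boundary analysis at $j=d+1$ and its reliance on the $j=d$ connecting map: one must track exactly which single linear-strand term of $L$ is alive in each internal degree and verify that the two halves of (b)---the regularity bound and the surjectivity of connecting maps---jointly deliver injectivity and surjectivity of $\alpha$ at the edge of the range, whereas away from the edge the regularity bound alone does the job.
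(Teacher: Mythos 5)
Your proposal is correct and follows essentially the same route as the paper: both feed the short exact sequence $0 \to I\cap L \to I\oplus L \to I+L \to 0$ into the long exact Tor sequence, use the $d$-linearity of $L$ to reduce $\mathrm{Tor}_i(I\oplus L,K)$ to $\mathrm{Tor}_i(I,K)$ in the relevant internal degrees, and then read off the two implications by squeezing $\mathrm{Tor}_i(I\cap L,K)$ between isomorphisms in one direction and combining the vanishing with the surjectivity of the connecting maps in the other. The only difference is cosmetic: where you split $(b)\Rightarrow(a)$ into the ranges $j\ge d+2$ and the boundary case $j=d+1$, the paper treats all $j>d$ uniformly by invoking the surjectivity of $\gamma_{i+1,j-1}$ throughout.
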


\begin{proof}
(a)\implies (b): For all $j\geq d$, the short exact sequence
\begin{equation*}
0 \to I\cap L \to I \oplus L \to I+L\to 0
\end{equation*}
induces the long exact sequence
$$\begin{CD}
\cdots @>{\alpha_{i+1,j}}>> \Tor_{i+1}(I+ L,K)_{(i+1)+j} @>{\gamma_{i+1,j}}>> \Tor_i(I\cap L ,K)_{i+(j+1)} \\ 
@>>> \Tor_i(I,K)_{i+(j+1)} @>{\alpha_{i,j+1}}>> \Tor_{i}(I+ L,K)_{i+(j+1)} @>>> \cdots.
\end{CD}$$
Here we used the fact that  $\Tor_i(I \oplus L,K)_{i+(j+1)}=\Tor_i(I,K)_{i+(j+1)}$ for  all $j\geq d$. Condition (a) implies that $\Ker \left(\alpha_{i,j+1} \right)=0$ and hence $\gamma_{i+1,j}$ is surjective for all $j\geq d$.

The above sequence implies that $\Tor_i(I\cap L ,K)_{i+(j+1)}=0$ for all $i$ and all $j>d$, since the maps $\alpha_{i+1,j}$ and $\alpha_{i,j+1}$ are isomorphism for all $i$ and all $j>d$. Therefore $\reg (I \cap L) \leq d+1$.

\medskip
(b)\implies (a): For $j>d$ consider the exact sequence
$$\begin{CD}
\cdots @>{\gamma_{i+1,j-1}}>> \Tor_i(I \cap L ,K)_{i+j}@>>> \Tor_i(I,K)_{i+j} \\
 @>{\alpha_{i,j}}>> \Tor_{i}(I+ L,K)_{i+j} @>>> \Tor_{i-1}(I\cap L, K)_{(i-1)+(j+1)} @>>> \cdots.
\end{CD}$$

Our assumptions in (b) implies that $\Tor_{i-1}(I\cap L, K)_{(i-1)+(j+1)}=0$ and that $\gamma_{i+1,j-1}$ is surjective. Hence $\alpha_{i,j}$ is an isomorphism for all $i$ and all $j>d$.
\end{proof}

\begin{lem} \label{rem}
Let $J \subseteq I$ be homogeneous ideals generated in the same degree $d$. Then, 
\begin{itemize}
\item[\rm (a)] $\beta_{i, i+d}(J)\leq \beta_{i,i+d}(I)$, for all $i$.
\end{itemize}
Moreover, if for all $i$ and $j >d$, one has
\begin{equation} \label{extra assumption on betti numbers}
\beta_{i,i+j}(I) = \beta_{i,i+j}(J)
\end{equation}
then:
\begin{itemize}
\item[\rm (b)] $\reg \left( I \right) = \reg \left( J \right)$;  
\item[\rm (c)] $\Index \left( I \right) = \Index \left( J \right)$;
\item[\rm (d)] $\projdim (I) \leq \projdim (J)$.
\end{itemize}

\end{lem}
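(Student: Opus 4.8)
The plan is to prove each part in turn, using the given hypothesis (\ref{extra assumption on betti numbers}) which already pins down all Betti numbers $\beta_{i,i+j}$ of $I$ and $J$ for $j>d$. The only freedom left is in the linear strand $j=d$, and part (a) controls exactly that. I would first establish (a), then deduce (b), (c), (d) as essentially formal consequences of (a) together with (\ref{extra assumption on betti numbers}).

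For part (a), I would consider the short exact sequence
\begin{equation*}
0 \to J \to I \to I/J \to 0
\end{equation*}
of graded modules. Since $J$ and $I$ are both generated in degree $d$, the quotient $I/J$ is generated in degrees $\geq d$, hence $(I/J)_d$ accounts for the degree-$d$ minimal generators of $I$ not lying in $J$. Taking $\Tor(-,K)$ gives a long exact sequence, and I would extract the piece around homological degree $i$ in internal degree $i+d$. The key observation is that $\Tor_{i+1}(I/J,K)_{i+d}$ vanishes: because $I/J$ has no generators below degree $d$, its minimal free resolution starts in degree $d$, so $\beta_{i+1,i+d}(I/J)=0$ (the internal degree $i+d$ is too low to support a syzygy at homological step $i+1$). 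With this vanishing the connecting map forces the natural map $\Tor_i(J,K)_{i+d}\to\Tor_i(I,K)_{i+d}$ to be injective, giving $\beta_{i,i+d}(J)\leq\beta_{i,i+d}(I)$ for all $i$. This degree-count vanishing is the one step that deserves care, and I expect it to be the main (though mild) obstacle.

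For part (b), recall $\reg(M)=\sup\{j-i:\beta_{i,i+j}(M)\neq 0\}$. By (\ref{extra assumption on betti numbers}) the contributions from strands $j>d$ are identical for $I$ and $J$, and in the strand $j=d$ both modules contribute the value $d-i \leq d$ at the relevant spots. Since any nonzero Betti number in a strand $j>d$ contributes regularity at least $d+1>d$, the supremum is governed by the $j>d$ strands whenever either module has a nonlinear syzygy, and in that case the two suprema coincide by (\ref{extra assumption on betti numbers}); if neither module has any syzygy with $j>d$, then both have $d$-linear resolutions and $\reg(I)=\reg(J)=d$. Either way $\reg(I)=\reg(J)$.

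For part (c), I would use the characterization of $\Index$ via the vanishing of $\beta_{i,i+j}$ for $j>\mathrm{indeg}$; since $\mathrm{indeg}(I)=\mathrm{indeg}(J)=d$ and (\ref{extra assumption on betti numbers}) makes the two families of nonlinear Betti numbers agree identically, $I$ is $r$ steps linear if and only if $J$ is, so the suprema defining the index coincide. For part (d), $\projdim(M)=\max\{i:\beta_{i,i+j}(M)\neq 0\text{ for some }j\}$. Any homological degree $i$ contributing to $\projdim(I)$ does so through some strand; if that strand has $j>d$ then (\ref{extra assumption on betti numbers}) gives the same nonvanishing for $J$, and if it is the linear strand $j=d$ then part (a) gives $\beta_{i,i+d}(J)\geq\beta_{i,i+d}(I)>0$, again yielding a contribution to $\projdim(J)$ at the same homological degree. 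Hence every $i$ realizing $\projdim(I)$ also appears for $J$, giving $\projdim(I)\leq\projdim(J)$. The inequality (rather than equality) is exactly because the linear strand of $J$ may be strictly larger than that of $I$, which is precisely the content of the inequality in (a).
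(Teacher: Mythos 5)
Your parts (a), (b) and (c) are correct and follow essentially the same route as the paper: the short exact sequence $0\to J\to I\to I/J\to 0$, the vanishing of $\Tor_{i+1}(I/J,K)_{i+d}$ because $I/J$ has no generators below degree $d$, and formal bookkeeping of the strands for regularity and index. (A minor slip in (b): a nonzero entry of the linear strand contributes exactly $d$, not $d-i$, to $\sup\{j-i\}$; this does not affect the argument.)

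Part (d) contains a genuine error. You write that ``part (a) gives $\beta_{i,i+d}(J)\geq\beta_{i,i+d}(I)$,'' and again at the end that the linear strand of $J$ may be strictly larger than that of $I$, ``which is precisely the content of the inequality in (a).'' This is the reverse of what (a) says: (a) gives $\beta_{i,i+d}(J)\leq\beta_{i,i+d}(I)$, so a nonzero linear Betti number of $I$ tells you nothing about the corresponding one of $J$, and your argument for $\projdim(I)\leq\projdim(J)$ collapses at exactly that point. In fact the inequality you are trying to prove is false as stated: take $J=(x_1x_2)\subseteq I=(x_1x_2,x_1x_3,x_2x_3)$, both generated in degree $2$ and both with $2$-linear resolutions (so the hypothesis on the nonlinear strands holds vacuously), yet $\projdim(I)=1>0=\projdim(J)$. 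The printed statement of (d) is evidently a typo for $\projdim(J)\leq\projdim(I)$: combining (a) with the hypothesis gives $\beta_{i,i+j}(J)\leq\beta_{i,i+j}(I)$ for all $i,j$, hence every nonvanishing position of $J$ is a nonvanishing position of $I$ and $\projdim(J)\leq\projdim(I)$; this is what the paper's own proof establishes, and it is the direction actually used later, e.g.\ in Theorem~\ref{main}(c), where the smaller ideal $I\left(\bar{\mathcal{C}}\right)$ plays the role of $J$. So the correct repair is not to fix your argument but to reverse the inequality in the statement and then run your strand-by-strand argument in the direction that (a) actually permits.
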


\begin{proof}
(a) From the short exact sequence
$$0 \longrightarrow J \longrightarrow I \longrightarrow \frac{I}{J} \longrightarrow 0$$
we get the long exact sequence:
\begin{align*}
\cdots \to \Tor_{i+1}(I/J,K)_{(i+1)+(d-1)} \to  \Tor_i(J,K)_{i+d} \stackrel{\phi} \longrightarrow \Tor_i(I,K)_{i+d} \to \cdots.
\end{align*}
Since $I/J$ is generated in degree $d$, $\Tor_{i+1}(I/J,K)_{(i+1)+(d-1)}=0$ and so $\phi$ is injective. Therefore, 
\begin{equation*}
\beta_{i,i+d}(J) = \dim_K \, \left( \Tor_i(J,K)_{i+d} \right) \leq \dim_K \, \left( \Tor_i(I,K)_{i+d} \right) = \beta_{i,i+d}(I).
\end{equation*}	

(b) Let $r:= \mathrm{reg} (J)$. Since $J$ is generated by elements of degree $d$, it follows that $r \geq d$. If $r=d$, then (\ref{extra assumption on betti numbers}) implies that $\beta_{i,j}(I) =0$, for all $j>i+d$. Hence, $\mathrm{reg} (I)=d = \mathrm{reg} (J)$. If $r>d$, then using (\ref{extra assumption on betti numbers}) again, we get the conclusion.

(c) If $\mathrm{index} (I) = \infty$, then $I$ has a $d$-linear resolution and by (b), the ideal $J$ has a $d$-linear resolution too. This is equivalent to say that $\mathrm{index}(J) = \infty$. In the case that $\mathrm{index} (I)$ is finite, (c) is again a direct consequence of (\ref{extra assumption on betti numbers}).

(d) Our assumption in (\ref{extra assumption on betti numbers}) together with part (a), implies that
\begin{equation*}
\beta_{i, i+j}(J) \leq \beta_{i, i+j}(I)
\end{equation*}
for all $i$ and $j$. This implies that,
\begin{align*}
\projdim (J) = \max\{i \colon  \beta_{i, i+j}(J) \neq 0 \text{ for some } j \} & \leq \max\{i \colon  \beta_{i, i+j}(I) \neq 0 \text{ for some } j \} \\
& =\projdim (I).
\end{align*}
\end{proof}

\begin{prop} \label{Cor1}
Let $I $ and $L$ be  graded ideals generated in degree $d$ such that both ideals $I \cap L$ and $L$ have a $d$-linear resolution. Then, 
\begin{itemize}
\item[\rm (a)] $\beta_{i,i+j} \left( I \right) = \beta_{i,i+j} \left( I+L \right)$, for  all $i$ and all $j>d$;
\item[\rm (b)] $\projdim \left( I+L \right) = \max \left\{ \projdim \left( I \right), \, \projdim  \left( L \right) \right\}$.
\end{itemize}
\end{prop}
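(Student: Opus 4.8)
The plan is to obtain (a) as an immediate application of Lemma~\ref{ll} and then to bootstrap to (b) through a close reading of the linear strand $j=d$. For part (a), since $I$ and $L$ are generated in degree $d$ and $L$ has a $d$-linear resolution, I would apply Lemma~\ref{ll}, so it suffices to verify condition (b) there. The assumption that $I\cap L$ has a $d$-linear resolution makes both halves of that condition automatic: on the one hand $\reg(I\cap L)=d\le d+1$; on the other hand, for every $j\ge d$ the target $\Tor_i(I\cap L,K)_{i+(j+1)}$ of the connecting map $\gamma_{i+1,j}$ vanishes, because $j+1>d$ and the resolution of $I\cap L$ is linear, so $\gamma_{i+1,j}$ is (trivially) surjective. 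Lemma~\ref{ll} then delivers $\beta_{i,i+j}(I)=\beta_{i,i+j}(I+L)$ for all $i$ and all $j>d$.

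For part (b) I would read off the linear strand from the long exact $\Tor$-sequence attached to $0\to I\cap L\to I\oplus L\to I+L\to 0$. Since $I+L$ is generated in degree $d$ we have $\Tor_{i+1}(I+L,K)_{(i+1)+(d-1)}=0$, and since $I\cap L$ is resolved linearly we have $\Tor_{i-1}(I\cap L,K)_{(i-1)+(d+1)}=0$; the sequence therefore collapses, for every $i$, to
$$0\to \Tor_i(I\cap L,K)_{i+d}\to \Tor_i(I,K)_{i+d}\oplus\Tor_i(L,K)_{i+d}\to \Tor_i(I+L,K)_{i+d}\to 0.$$
This yields the upper bound at once: for $j>d$ the Betti numbers of $I+L$ and $I$ coincide by part (a), while for $j=d$ the displayed sequence forces $\beta_{i,i+d}(I+L)\le\beta_{i,i+d}(I)+\beta_{i,i+d}(L)$, which is $0$ as soon as $i>\max\{\projdim I,\projdim L\}$. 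Hence $\projdim(I+L)\le\max\{\projdim I,\projdim L\}$.

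The reverse inequality is the step I expect to be the real obstacle, since a priori the linear strand could cancel completely at the top homological degree. To prevent this I would show that each component map $\Tor_i(I\cap L,K)_{i+d}\to\Tor_i(I,K)_{i+d}$ and $\Tor_i(I\cap L,K)_{i+d}\to\Tor_i(L,K)_{i+d}$ is injective. By the second isomorphism theorem $I/(I\cap L)\cong (I+L)/L$ and $L/(I\cap L)\cong (I+L)/I$ are generated in degree $d$, so in the corresponding $\Tor$-sequences the neighbouring term $\Tor_{i+1}(\,\cdot\,,K)_{(i+1)+(d-1)}$ vanishes and the injectivity follows; this gives $\beta_{i,i+d}(I\cap L)\le\beta_{i,i+d}(I)$ and $\beta_{i,i+d}(I\cap L)\le\beta_{i,i+d}(L)$. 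Substituting these into the displayed short exact sequence gives $\beta_{i,i+d}(I+L)\ge\beta_{i,i+d}(I)$ and $\beta_{i,i+d}(I+L)\ge\beta_{i,i+d}(L)$, so a top linear Betti number of $I$ or of $L$ always survives into $I+L$; combined with part (a), which carries any top non-linear Betti number of $I$ over to $I+L$, this gives $\projdim(I+L)\ge\projdim I$ and $\projdim(I+L)\ge\projdim L$, completing the proof.
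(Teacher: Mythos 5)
Your proof is correct and follows essentially the same route as the paper: part (a) via Lemma~\ref{ll}, and part (b) by collapsing the long exact sequence of $0\to I\cap L\to I\oplus L\to I+L\to 0$ to a short exact sequence on the linear strand, with the non-linear strands handled by (a). The only cosmetic difference is in the lower bound $\projdim(I+L)\geq\max\{\projdim I,\projdim L\}$: the paper applies Lemma~\ref{rem}(a) directly to the inclusions $I\subseteq I+L$ and $L\subseteq I+L$, whereas you establish the same degree-reasons injectivity for $I\cap L\subseteq I$ and $I\cap L\subseteq L$ and then recover the inequalities $\beta_{i,i+d}(I+L)\geq\beta_{i,i+d}(I),\ \beta_{i,i+d}(L)$ by counting dimensions in the short exact sequence --- the same mechanism, applied one step earlier.
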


\begin{proof}
Statement (a) is an immediate consequence of Lemma~\ref{ll}. To obtain (b), let $\rho = \projdim \left( I+L \right)$. Then $\beta_{\rho,\rho+j} \left( I+L \right)\neq 0$, for some $j \geq d$. If $j>d$, then $\beta_{\rho,\rho+j} \left( I \right) \neq 0$ by (a). Therefore in this case, $\max \left\{ \projdim (I), \, \projdim(L) \right\} \geq \rho$. 

Now assume that  $j=d$. The short exact sequence
\begin{equation*}
0 \longrightarrow I \cap L \longrightarrow I \oplus L \longrightarrow  I+L \longrightarrow 0
\end{equation*}
induces the long exact sequence
\begin{equation*}
\begin{array}{lll}
\cdots & \longrightarrow \Tor_{\rho+1}\left( I+L, \, K \right)_{\left( \rho+1 \right)+ \left( d-1 \right)} & \to \Tor_\rho \left( I \cap L, \, K \right)_{\rho+d}  \to \Tor_\rho \left( I \oplus L, \, K \right)_{\rho+d} \\
& \longrightarrow  \Tor_\rho \left( I+L, \, K \right)_{\rho+d} & \to \Tor_{\rho-1} \left(I \cap L, \, K \right)_{\left( \rho-1 \right) + \left( d+1 \right)}  \longrightarrow  \cdots.
\end{array}
\end{equation*}
Since the ideal $I \cap L$ has a $d$-linear resolution, $\Tor_{\rho-1} \left( I \cap L, \, K \right)_{\left( \rho - 1 \right) + \left( d+1 \right)} = 0$. Moreover, $\Tor_{\rho+1} \left( I+ L, \, K \right)_{\left( \rho+1 \right) + \left( d-1 \right)} = 0$. Thus, we obtain the following short exact sequence:
\begin{equation*}
0 \to  \Tor_\rho \left( I \cap L, \, K \right)_{\rho + d} \to \Tor_\rho \left( I, \, K \right)_{\rho+d} \oplus \Tor_\rho \left( L, \, K \right)_{\rho+d} \to \Tor_\rho \left( I+L, \, K \right)_{\rho+d} \to 0.
\end{equation*}
Since $\beta_{\rho, \rho+d} \left( I+L \right)\neq 0$ by assumption,  this short exact sequence implies that either $\beta_{\rho, \rho+d} \left( I \right) \neq 0$ or $\beta_{\rho, \rho+d} \left( L \right) \neq 0$. Hence, again in this case we have 
\begin{equation*}
\max \left\{ \projdim (I), \, \projdim (L) \right\} \geq \rho.
\end{equation*}
In order to prove the opposite inequality, we set $\rho' := \max \{ \projdim (I), \, \projdim(L) \}$. Suppose that $\projdim \left( L \right)= \rho'$. Then $\beta_{\rho', \rho'+j} \left( L \right) \neq 0$, for some $j \geq d$. Since $L$ has a $d$-linear resolution, $j=d$. Using Lemma~\ref{rem}(a), we have $\beta_{\rho', \rho'+d} \left( L \right) \leq \beta_{\rho', \rho'+d}\left( I+L \right)$. Thus $\projdim \left( I+L \right) \geq \rho'$.

Suppose now that  $\projdim \left( I \right)= \rho'$. Then  $\beta_{\rho', \rho'+j} \left( I \right) \neq 0$ for some $j \geq d$. If $j>d$, then $\beta_{\rho', \rho'+j} \left( I+L \right)\neq 0$ by (a). So $\projdim \left( I+L \right) \geq \rho'$. If $j=d$, then $\beta_{\rho', \rho'+d} \left( I \right) \leq \beta_{\rho', \rho'+d} \left( I+L \right)$, by Lemma~\ref{rem}(a). It follows that $\projdim \left( I+L \right) \geq \rho'$, in this case too.
\end{proof}

In the following proposition we discuss on the hypotheses of Proposition~\ref{Cor1}  in the case that $I$ and $L$ are  monomial ideals generated in degree $d$ and the generators of $L$ have a common factor of degree $d-1$. As we shall see, in this case, for the ideal $I \cap L$ the matter of having linear resolution is independent of the characteristic of the base field. To state this proposition, first we fix some notations.

For a given monomial $w=x_1^{c_1}\ldots x_n^{c_n}$, let $\nu_i(w)$ be the integer $c_i$. Also, for a monomial ideal $I$, let $G(I)$ denotes the unique minimal set of  monomial generators of $I$. Recall that, for monomial ideals $I$ and $L$, one has (see e.g. \cite[Proposition 1.2.1]{HHBook})
\begin{equation*} 
I \cap L = \left( {\rm lcm} \left( u, v \right) \colon \quad u \in G(I) \text{ and } v \in G(L) \right).
\end{equation*}

\begin{prop} \label{nice}
Let $0 \neq I \subset S= K[x_1, \ldots, x_n]$ be a monomial ideal generated in degree $d$, $u$ a non-zero monomial of degree $d-1$ and $\mathcal{L}$ a non-empty subset of  $\{x_1, \ldots, x_n\}$. Let $L= \left( x_iu \colon \quad x_i \in \mathcal{L} \right)$. The following conditions are equivalent:
\begin{itemize}
\item[\rm (a)] the ideal $I \cap L$ has a $d$-linear resolution;
\item[\rm (b)] $I \cap L$ is a non-zero monomial ideal generated in degree $d$;
\item[\rm (c)] either $L \subseteq I$, or for all $v \in G(I)$ there exists $x_i \in \mathcal{L}$ with $x_iu \in I$ such that $\nu_i \left( u \right) + 1 \leq \nu_i \left( v \right)$;
\item[\rm (d)] either $L \subseteq I$, or $I \cap L = \left( x_iu \colon \quad x_i \in \mathcal{L} \text{ and } x_iu \in I \right) \neq 0$.
\end{itemize}
\end{prop}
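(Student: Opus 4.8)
The plan is to establish the cyclic chain of implications (a) $\Rightarrow$ (b) $\Rightarrow$ (c) $\Rightarrow$ (d) $\Rightarrow$ (a), after first recording two structural facts about $I \cap L$ that drive the whole argument. The first is an analysis of the minimal generators of $I\cap L$: starting from the formula $I\cap L = \left( \mathrm{lcm}(v, x_j u) \colon v \in G(I),\ x_j \in \mathcal{L} \right)$ and observing that $\deg \mathrm{lcm}(v, x_j u) \geq d$ with equality exactly when $v = x_j u$, one sees that the degree-$d$ monomials of $I\cap L$ are precisely the $x_i u$ with $x_i \in \mathcal{L}$ and $x_i u \in I$. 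Writing $L' := \left( x_i u \colon x_i \in \mathcal{L},\ x_i u \in I \right)$ for the ideal they generate, we always have $L' \subseteq I\cap L$, and $L'$ is exactly the ideal generated by the degree-$d$ part $(I\cap L)_d$. The second fact is that any ideal of the shape $u\cdot(x_i \colon x_i \in \mathcal{S})$, with $\mathcal{S}$ a nonempty set of variables, has a $d$-linear resolution over any field: multiplication by $u$ is an injective degree-$(d-1)$ map identifying it with $(x_i \colon x_i \in \mathcal{S})(-(d-1))$, and an ideal generated by a subset of the variables has a characteristic-free (Koszul) linear resolution. In particular both $L$ and $L'$ have $d$-linear resolutions; this is the source of the characteristic-independence advertised before the statement.

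With these in hand, (a) $\Rightarrow$ (b) is immediate, since an ideal with a $d$-linear resolution is nonzero and minimally generated in degree $d$. For (b) $\Rightarrow$ (c): if $L \subseteq I$ the first alternative of (c) holds, so I would assume some $x_j u \notin I$; then for an arbitrary $v \in G(I)$ the generator $\mathrm{lcm}(v, x_j u) \in I\cap L$ must, by (b), be divisible by a degree-$d$ minimal generator $x_i u$ (with $x_i \in \mathcal{L}$, $x_i u \in I$, and necessarily $i \neq j$ since $x_j u \notin I$). The divisibility $x_i u \mid \mathrm{lcm}(v, x_j u)$ then forces $\nu_i(u)+1 \leq \nu_i(v)$, which is exactly the condition demanded in (c).

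The implication (c) $\Rightarrow$ (d) reverses this computation: assuming the second alternative of (c), for each $v \in G(I)$ the chosen $x_i$ satisfies $x_i u \mid \mathrm{lcm}(v, x_j u)$ for \emph{every} $x_j \in \mathcal{L}$ — one verifies the exponent inequalities variable by variable, using $\nu_i(u)+1 \leq \nu_i(v)$ at the variable $x_i$ and $\nu_k(u) \leq \nu_k(x_j u)$ elsewhere — so every generator of $I\cap L$ lies in $L'$. Together with $L' \subseteq I\cap L$ and the fact that $I\cap L \neq 0$ (as $S$ is a domain and $I, L \neq 0$), this yields $I\cap L = L' \neq 0$. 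Finally (d) $\Rightarrow$ (a) is where the second structural fact pays off: in either alternative of (d) the ideal $I\cap L$ equals $u\cdot(x_i \colon x_i \in \mathcal{S})$ for an appropriate nonempty set of variables $\mathcal{S}$ (namely $\mathcal{L}$ itself, or $\{x_i \in \mathcal{L} \colon x_i u \in I\}$), hence has a $d$-linear resolution.

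The routine but essential technical core — the one place I would be most careful — is the exponentwise comparison showing that, for $i \neq j$, one has $x_i u \mid \mathrm{lcm}(v, x_j u)$ if and only if $\nu_i(u)+1 \leq \nu_i(v)$; this single divisibility criterion powers both (b) $\Rightarrow$ (c) and (c) $\Rightarrow$ (d), and everything else is bookkeeping around the special shape of $L$. The conceptual point to keep visible throughout is that the hypothesis on $L$ (its generators share the common factor $u$ of degree $d-1$) forces $I\cap L$, whenever it is generated in a single degree, to be $u$ times an ideal of variables — which is precisely why the linear-resolution property here cannot depend on the characteristic of the base field.
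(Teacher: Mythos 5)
Your proposal is correct and follows essentially the same route as the paper: the same lcm description of $I\cap L$, the same cycle (a)$\Rightarrow$(b)$\Rightarrow$(c)$\Rightarrow$(d)$\Rightarrow$(a), and the same exponent-by-exponent divisibility comparisons. The only difference is cosmetic — you spell out why $u\cdot(x_i\colon x_i\in\mathcal{S})$ has a linear resolution (multiplication by $u$ plus the Koszul complex), a step the paper dismisses as easily verified.
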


\begin{proof}
First of all note that:
\begin{equation} \label{Generators of intersection}
I \cap L = \left( {\rm lcm} \left( x_iu, v \right) \colon \quad x_i \in \mathcal{L} \text{ and } v \in G(I) \right).
\end{equation}

The implication (a)\implies (b) is obvious.

(b)\implies (c): Suppose that $L \nsubseteq I$ and take a generator $v \in G(I)$. The assumption $L \nsubseteq I$ implies that there exists $x_j\in \mathcal{L}$ such that  $x_ju\notin I$. In particular, $x_ju \neq v$ and $\deg \left( {\rm lcm} \left(x_ju, v \right) \right) >d$. Since $I \cap L \neq 0$ is generated in degree $d$, it follows that there exists $x_i \in \mathcal{L}$ such that $x_iu \in G(I)$ and $x_iu$ divides ${\rm lcm}(v, x_ju)$. Note that $i \neq j$, for $x_iu \in I$. Furthermore,
\begin{equation*}
1 + \nu_i \left( u \right) = \nu_i\left( x_iu \right) \leq \max \left\{ \nu_i\left( x_ju \right), \nu_i\left( v \right)  \right\} = \max \left\{ \nu_i \left( u \right), \nu_i\left( v \right) \right\}.
\end{equation*}
The above inequality shows that $\nu_i \left( u \right) + 1 \leq \nu_i \left( v \right)$.

(c)\implies (d): If $L \subseteq I$, there exists nothing to prove. So assume that $L \nsubseteq I$ and let 
\begin{equation*}
L' = \left( x_iu \colon \quad x_i \in \mathcal{L} \text{ and } x_iu \in I \right).
\end{equation*}
By~(\ref{Generators of intersection}), it is enough to show that ${\rm lcm} \left( x_ju, v \right) \in L'$, for all $x_j \in \mathcal{L}$ and $v \in G(I)$.

Let $v \in G(I)$ and $x_j \in \mathcal{L}$. Our assumption implies that there exists $x_i \in \mathcal{L}$ with $x_iu \in I$ such that $\nu_i \left( u \right) + 1 \leq \nu_i \left( v \right)$. So, $x_iu \in L'$ and
\begin{itemize}
\item[] $\nu_i \left( x_iu \right) = \nu_i \left( u \right) + 1 \leq \nu_i \left( v \right)$;
\item[] $\nu_k \left( x_iu \right) = \nu_k \left( u \right) \leq \nu_k \left( x_ju \right)$, for $k \neq i$.
\end{itemize}
Thus $\nu_k \left( x_iu \right) \leq \max \left\{ \nu_k \left( x_ju \right), \, \nu_k \left( v \right) \right\} = \nu_k \left( {\rm lcm} \left( x_ju, v \right) \right)$, for all $k=1, \ldots, n$. This implies that, $x_iu$ divides ${\rm lcm} \left( x_ju, v \right)$.

(d)\implies (a): The assumption implies that:
\begin{equation*}
I \cap L = \left( x_iu \colon  \quad x_i \in \mathcal{L}' \right),
\end{equation*}
where $\mathcal{L}'$ is a non-empty subset of $\{x_1, \ldots, x_n\}$. One may easily verify that such ideals have a linear resolutions.
\end{proof}

\begin{rem} \label{rem on nice}
Let $0 \neq I \subset S= K[x_1, \ldots, x_n]$ be a square-free monomial ideal generated in degree $d$, $u$ a non-zero square-free monomial of degree $d-1$ and $\mathcal{L}$ a non-empty subset of  $\{x_1, \ldots, x_n\}$ such that,
\begin{itemize}
\item $x_i$ does not divide $u$ for all $x_i \in \mathcal{L}$,
\item $L = \left( x_iu \colon \quad x_i \in \mathcal{L} \right) \nsubseteq I$.
\end{itemize}
Then, $L$ has a $d$-linear resolution with $\projdim \left( L \right) = |\mathcal{L}| - 1$ and Proposition~\ref{nice} implies the equivalence of the following statements:
\begin{itemize}
\item[\rm (a)] the ideal $I \cap L$ has a $d$-linear resolution;
\item[\rm (b)] for all $v \in G(I)$ there exists $x_i \in \mathcal{L}$ such that $x_iu \in I$ and $x_i$ divides $v$.
\end{itemize}
\end{rem}

The following corollary is an essential part of proving the main theorem (Theorem~\ref{main}) of this section.
\begin{cor} \label{Essential of main theorem}
Let $\mathcal{C}$ be a $d$-uniform clutter on the vertex set $[n]$,  $e$ a simplicial element of $\C$ and $\C'=\C \setminus e$. Let $I=I \left( \bar{\C} \right)$ and $J=I \left( \bar{\C}' \right)$ be the corresponding circuit ideals. Then,
 \begin{itemize}
 	\item[\rm (a)] $\beta_{i,i+j} \left( I  \right) =\beta_{i,i+j} \left( J \right)$, for all $i$ and all $j>d$.
 	\item[\rm (b)] $\projdim (J) \geq \projdim (I)$. Indeed, if $e \in {\rm SC}(\C)$, then $\projdim\left( J \right) = n-d$.
 \end{itemize}
\end{cor}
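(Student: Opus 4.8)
The plan is to realize $J$ as $I+\widetilde L$ for a well-chosen ideal $\widetilde L$ and then invoke Proposition~\ref{Cor1}. Comparing complements gives $\bar{\C'}=\bar\C\cup\{G\subseteq[n]\colon |G|=d,\ e\subseteq G\}$, so setting
\[
\widetilde L=\left(\textbf{x}_{G}\colon\ |G|=d,\ e\subseteq G\right)=\left(\textbf{x}_{e\cup\{c\}}\colon\ c\in[n]\setminus e\right)=u\cdot\left(x_c\colon\ c\in[n]\setminus e\right),
\]
where $u=\textbf{x}_e$ is the squarefree monomial of degree $d-1$ supported on $e$, we get $J=I+\widetilde L$. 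Write $N=N_\C[e]$. If $e\notin{\rm SC}(\C)$, then no circuit contains $e$, so $\C'=\C$, $J=I$, and everything is trivial; hence I assume $N\setminus e\neq\varnothing$. Since $\textbf{x}_{e\cup\{c\}}\notin I$ for $c\in N\setminus e$ (such a set is a circuit, hence not in $\bar\C$), this also gives $\widetilde L\nsubseteq I$.

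The heart of the argument is to check that $I\cap\widetilde L$ has a $d$-linear resolution, which I would do via Remark~\ref{rem on nice} applied with $u=\textbf{x}_e$ and $\mathcal L=\{x_c\colon c\in[n]\setminus e\}$ (note $x_c\nmid u$ for $c\notin e$, and $\widetilde L\nsubseteq I$). By that remark it suffices to show: for every $v=\textbf{x}_G\in G(I)$, i.e. every $G\in\bar\C$, there is $c\in[n]\setminus e$ with $x_cu\in I$ and $x_c\mid v$. Now $x_cu=\textbf{x}_{e\cup\{c\}}\in I$ exactly when $e\cup\{c\}\notin\C$, i.e. when $c\notin N$, while $x_c\mid v$ means $c\in G$; so the required condition is precisely that every $G\in\bar\C$ meets $[n]\setminus N$. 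This is where simpliciality is used, and it is the only genuinely nontrivial point: since $e$ is simplicial, $N=N_\C[e]$ is a clique, so every $d$-subset of $N$ lies in $\C$; as $G\notin\C$ we must have $G\nsubseteq N$, i.e. $G$ contains a vertex outside $N$. Thus the clique property resolves the main obstacle immediately.

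With both $I\cap\widetilde L$ and $\widetilde L$ having $d$-linear resolutions, Proposition~\ref{Cor1}(a) gives
\[
\beta_{i,i+j}(I)=\beta_{i,i+j}(I+\widetilde L)=\beta_{i,i+j}(J)\qquad\text{for all }i\text{ and all }j>d,
\]
which is (a). For (b), Proposition~\ref{Cor1}(b) yields
\[
\projdim(J)=\projdim(I+\widetilde L)=\max\left\{\projdim(I),\ \projdim(\widetilde L)\right\}\geq\projdim(I),
\]
proving the first assertion. For the final claim, Remark~\ref{rem on nice} gives $\projdim(\widetilde L)=|\mathcal L|-1=(n-d+1)-1=n-d$. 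On the other hand, $I$ is generated in degree $d$, so its Stanley--Reisner complex contains the full $(d-2)$-skeleton; hence $\mathrm{depth}(S/I)\geq d-1$, and Auslander--Buchsbaum gives $\projdim(I)=\projdim(S/I)-1\leq n-d$. Therefore, when $e\in{\rm SC}(\C)$ the maximum above equals $n-d$, so $\projdim(J)=n-d$, completing the proof.
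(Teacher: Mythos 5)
Your proof is correct and follows essentially the same route as the paper: write $J=I+L$ with $L=u\cdot\left(x_c\colon c\in[n]\setminus e\right)$, verify condition (b) of Remark~\ref{rem on nice} via the clique property of $N_\C[e]$, and conclude by Proposition~\ref{Cor1}. The only difference is that you explicitly justify $\projdim(I)\leq n-d$ (needed to turn $\max\{\projdim(I),n-d\}$ into $n-d$), a step the paper leaves implicit; your skeleton/depth argument for it is valid.
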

 
\begin{proof}
If $e \notin {\rm SC}(\C)$, then $\C \setminus e = \C$ and there is nothing to prove. So assume that $e \in {\rm SC}(\C)$ is simplicial. In this case, let $u=\prod_{i \in e}x_i$ and $L=\left( x_iu \colon \quad i \in [n] \setminus e \right)$. Then $L$ has a $d$-linear resolution with $\projdim \left( L \right) = n-d$ and $J=I+L$. We claim that $I \cap L$ has a $d$-linear resolution.
To prove our claim, first note that $L \nsubseteq I$, because $e \in {\rm SC}(\C)$. So, it is enough to show that statement (b) in Remark~\ref{rem on nice} is satisfied for the ideal $I$ and the subset $\mathcal{L} = \left\{x_i \colon i \in [n] \setminus e \right\}$ of variables. Assume that $v \in G(I)$. Then $v={\textbf x}_F$ for some $F\in \bar{\mathcal{C}}$. Since $e$ is a simplicial  submaximal circuit of $\mathcal{C}$, we have $F \nsubseteq {N}_\C[e]$. Take an element $i \in F \setminus N_\C[e]$. Then $x_i \in \mathcal{L}$, $\left\{ i \right\} \cup e \in \bar{\C}$, $x_i|v$ and $x_iu \in I$. This completes the proof of the claim.

Since $L$ and $I \cap L$ have  linear resolutions, statements (a) and (b) can be obtained from Proposition~\ref{Cor1}.
\end{proof}

Now we are ready to prove our main theorem.

\medskip
\begin{proof}[Proof of  Theorem~$\ref{main}$] 
	If $A = \{ F \in \C \colon \quad e \subset F \}$, then the Corollary~\ref{Essential of main theorem}(a), yields the conclusion. So assume that $A':= \{ F \in \C \colon \quad e \subset F \text{ and } F \notin A \} \neq \varnothing$. Then, it is clear that $\mathcal{D} = \left( \C \setminus e \right) \cup A'$ and $e$ is a simplicial submaximal circuit of $\mathcal{D}$ (and $\C$). Moreover, $\mathcal{D} \setminus e = \C \setminus e$. Hence, by Corollary~\ref{Essential of main theorem}(a), we conclude that:
\begin{equation*}
\beta_{i,i+j} \left( I \left( \bar{\mathcal{D}} \right) \right) = \beta_{i,i+j} \left( I \left( \overline{\mathcal{D} \setminus e} \right) \right) = \beta_{i,i+j} \left( I \left( \overline{\C \setminus e} \right) \right) = \beta_{i,i+j} \left( I \left( \bar{\C} \right) \right)
\end{equation*}	
for all $i$ and $j>d$, as desired. The other assertions are direct consequences of Lemma~\ref{rem}.
\end{proof}

\section{chordal clutters} \label{Section Chordal}
Chordal graphs are probably the most important combinatorial objects in the problem of  classification of monomial ideals with linear resolution (quotients). Thanks to Fr\"oberg, we know that a square-free monomial ideal $I$ generated in degree $2$ has a linear resolution (over any field $K$) if and only if   $I=I\left( \bar{G} \right)$, for some chordal graph $G$ (\cite{Fr}). Later, this result has been improved by showing that $I$ has linear quotients if and only if   $I=I\left( \bar{G} \right)$, where $G$ is a chordal graph (\cite{HHZ}).

This successful combinatorial characterization of ideals generated in degree $2$ with linear resolution (quotients), motivated many mathematicians to generalize this result for square-free monomial ideals generated in degree $d>2$. Some partial results on this subject are given in \cite{Adiprasito, Nevo, ConnonFaridi, Emtander, HaVanTuyl, MNYZ, MYZ, VanTuyl-Villarreal, Woodroofe}. The goal of this section is to introduce a class of $d$-uniform clutters (which we call chordal clutters) which extends the definition of the class of chordal graphs, and the corresponding circuit ideal has a linear resolution over any field. The idea to state the definition in this manner comes from Theorem~\ref{main}(a).

\begin{defn}\rm
Let $\mathcal{C}$ be a $d$-uniform clutter. We call $\mathcal{C}$ a \textit{chordal clutter}, if either $\mathcal{C}=\varnothing$, or $\mathcal{C}$ admits a simplicial submaximal circuit $e$ such that $\mathcal{C}\setminus e$ is chordal.
\end{defn}
Following the notation in \cite{MNYZ}, we use $\mathfrak{C}_d$, to denote the class of all $d$-uniform chordal clutters. Our aim in this section is to show that, this definition is the natural generalization of chordal graphs. That is, with this definition, we show that the circuit ideal associated to these clutters has a linear resolution over any field. Note that the definition of chordal clutters can be restated as follows:

The $d$-uniform clutter $\mathcal{C}$ is chordal if either $\mathcal{C} = \varnothing$, or else there exists a sequence of submaximal circuits of $\mathcal{C}$, say $e_1, \ldots, e_t$, such that $e_1$ is simplicial submaximal circuit of $ \mathcal{C}$, $e_i$ is simplicial submaximal circuit of $\left( \left( \left( \mathcal{C} \setminus e_{1} \right) \setminus e_2 \right)  \setminus \cdots \right) \setminus e_{i-1}$ for all $i>1$, and $\left( \left( \left(  \mathcal{C} \setminus e_{1} \right) \setminus e_2 \right) \setminus \cdots \right) \setminus e_{t} = \varnothing$. 

To simplify the notation, we use $\mathcal{C}_{e_1 \dots e_i}$ for $\left( \left( \left( \mathcal{C} \setminus e_{1} \right) \setminus e_2 \right) \setminus \cdots \right) \setminus e_{i}$.

\begin{ex}
In Figure~\ref{F2}, the $3$-uniform clutter $\mathcal{C}$ is chordal, while  the $3$-uniform clutter $\mathcal{D}$ is not.
	\begin{align*}
	&\mathcal{C}=\{ \{ 1,2,3 \}, \{ 1,2,4 \}, \{ 1,3,4 \}, \{ 2,3,4 \}, \{ 1,2,5 \},
	\{ 1,2,6 \}, \{ 1,5,6 \}, \{ 2,5,6 \} \}. \\
	&\mathcal{D}=\{\{1,2,3\}, \{1,2,4\}, \{1,3,4\},\{2,3,5\}, \{2,4,5\}, \{3,4,5\}\}.
	\end{align*}
	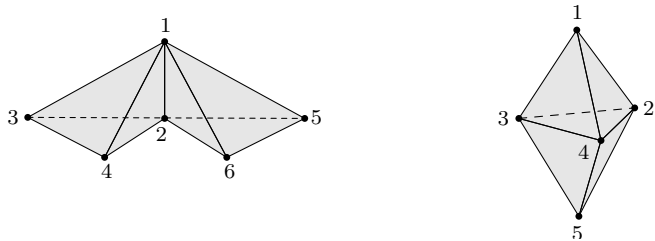
\begin{figure}[ht!]
		\begin{tikzpicture}[line cap=round,line join=round,>=triangle 45,x=0.7cm,y=0.7cm]
		\clip(5.26,2.19) rectangle (12.44,6.8);
		\fill[fill=black,fill opacity=0.1] (8.8,5.94) -- (6.2,4.5) -- (7.66,3.74) -- cycle;
		\fill[fill=black,fill opacity=0.1] (8.8,5.94) -- (8.8,4.48) -- (7.66,3.74) -- cycle;
		\fill[fill=black,fill opacity=0.1] (8.8,5.94) -- (11.46,4.48) -- (9.98,3.74) -- cycle;
		\fill[fill=black,fill opacity=0.1] (8.8,5.94) -- (8.8,4.48) -- (9.98,3.74) -- cycle;
		\draw (8.8,5.94)-- (6.2,4.5);
		\draw (6.2,4.5)-- (7.66,3.74);
		\draw (7.66,3.74)-- (8.8,5.94);
		\draw (8.8,5.94)-- (8.8,4.48);
		\draw (8.8,4.48)-- (7.66,3.74);
		\draw (7.66,3.74)-- (8.8,5.94);
		\draw (8.8,5.94)-- (11.46,4.48);
		\draw (11.46,4.48)-- (9.98,3.74);
		\draw (9.98,3.74)-- (8.8,5.94);
		\draw (8.8,5.94)-- (8.8,4.48);
		\draw (8.8,4.48)-- (9.98,3.74);
		\draw (9.98,3.74)-- (8.8,5.94);
		\draw [dash pattern=on 2pt off 2pt] (6.2,4.5)-- (11.46,4.48);
		\draw (8.52,6.58) node[anchor=north west] {\begin{scriptsize}1\end{scriptsize}};
		\draw (8.44,4.50) node[anchor=north west] {\begin{scriptsize}2\end{scriptsize}};
		\draw (5.62,4.83) node[anchor=north west] {\begin{scriptsize}3\end{scriptsize}};
		\draw (7.4,3.8) node[anchor=north west] {\begin{scriptsize}4\end{scriptsize}};
		\draw (11.38,4.82) node[anchor=north west] {\begin{scriptsize}5\end{scriptsize}};
		\draw (9.74,3.8) node[anchor=north west] {\begin{scriptsize}6\end{scriptsize}};
		\begin{scriptsize}
		\fill [color=black] (8.8,5.94) circle (1.3pt);
		\fill [color=black] (6.2,4.5) circle (1.3pt);
		\fill [color=black] (7.66,3.74) circle (1.3pt);
		\fill [color=black] (8.8,4.48) circle (1.3pt);
		\fill [color=black] (11.46,4.48) circle (1.3pt);
		\fill [color=black] (9.98,3.74) circle (1.3pt);
		\end{scriptsize}
		\end{tikzpicture}
		\quad
		\quad
		\quad
		\begin{tikzpicture}[line cap=round,line join=round,>=triangle 45,x=.7cm,y=.7cm]
		\clip(13.89,2.19) rectangle (18.76,6.8);
		\fill[fill=black,fill opacity=0.1] (16.24,6.16) -- (15.14,4.48) -- (16.7,4.06) -- cycle;
		\fill[fill=black,fill opacity=0.1] (16.24,6.16) -- (17.34,4.68) -- (16.7,4.06) -- cycle;
		\fill[fill=black,fill opacity=0.1] (15.14,4.48) -- (16.28,2.62) -- (16.7,4.06) -- cycle;
		\fill[fill=black,fill opacity=0.1] (17.34,4.68) -- (16.28,2.62) -- (16.7,4.06) -- cycle;
		\draw (16.24,6.16)-- (15.14,4.48);
		\draw (15.14,4.48)-- (16.7,4.06);
		\draw (16.7,4.06)-- (16.24,6.16);
		\draw (16.24,6.16)-- (17.34,4.68);
		\draw (17.34,4.68)-- (16.7,4.06);
		\draw (16.7,4.06)-- (16.24,6.16);
		\draw [dash pattern=on 3pt off 3pt] (15.14,4.48)-- (17.34,4.68);
		\draw (15.14,4.48)-- (16.28,2.62);
		\draw (16.28,2.62)-- (16.7,4.06);
		\draw (16.7,4.06)-- (15.14,4.48);
		\draw (17.34,4.68)-- (16.28,2.62);
		\draw (16.28,2.62)-- (16.7,4.06);
		\draw (16.7,4.06)-- (17.34,4.68);
		\draw (15.95,6.83) node[anchor=north west] {\begin{scriptsize}1\end{scriptsize}};
		\draw (17.31,5) node[anchor=north west] {\begin{scriptsize}2\end{scriptsize}};
		\draw (14.55,4.8) node[anchor=north west] {\begin{scriptsize}3\end{scriptsize}};
		\draw (16.07,4.17) node[anchor=north west] {\begin{scriptsize}4\end{scriptsize}};
		\draw (15.95,2.62) node[anchor=north west] {\begin{scriptsize}5\end{scriptsize}};
		\begin{scriptsize}
		\fill [color=black] (16.24,6.16) circle (1.3pt);
		\fill [color=black] (15.14,4.48) circle (1.3pt);
		\fill [color=black] (16.7,4.06) circle (1.3pt);
		\fill [color=black] (17.34,4.68) circle (1.3pt);
		\fill [color=black] (16.28,2.62) circle (1.3pt);
		\end{scriptsize}
		\end{tikzpicture}
		\caption{The Clutter $\C$ on the left and $\mathcal{D}$ on the right}
		\label{F2}
	\end{figure}
\end{ex}

\begin{rem}
It is worth to say that this definition of chordal clutters coincides with the graph theoretical definition of chordal graphs in the case $d=2$. To see this, we recall that, a graph $G$ is chordal, if and only if for every  induced subgraph $G'$ of $G$, one has $\mathrm{Simp} \left(G' \right) \neq \varnothing$ (essentially \cite{Dirac}). It follows that, a graph $G$ is chordal, if and only if   there exists an order on vertices of $G$, say $v_1, \ldots v_n$, such that $v_i$ is simplicial in $G|_{\left\{ v_1, \ldots, v_{i-1} \right\}}$. This is equivalent to say that $G \in \mathfrak{C}_2$.
\end{rem}

The following result, which is contained in \cite[Remark 3.10]{MNYZ}, justifies our definition of chordality, because the property of being  chordal should, as in Fr\"oberg's theorem, imply that the circuit ideal has a $d$-linear resolution over any field $K$.

\begin{thm}	\label{chordallin}
Let $\mathcal{C} \neq \mathcal{C}_{n,d}$ be a $d$-uniform chordal clutter on the vertex set $[n]$. Then $I(\bar{\mathcal{C}})$ has a $d$-linear resolution over any field $K$.
\end{thm}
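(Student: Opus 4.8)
The plan is to run an induction on the number of circuits $|\mathcal{C}|$, exploiting the recursive nature of the definition of chordality together with the fact, already established in Theorem~\ref{main}, that deleting a simplicial submaximal circuit leaves the non-linear Betti numbers (hence the regularity) unchanged. Since both the notion of chordal clutter and the notion of simplicial submaximal circuit are purely combinatorial, I would carry out the whole argument for a fixed but arbitrary field $K$; because no step depends on $\mathrm{char}(K)$, the conclusion then holds over every field. Note first that the hypothesis $\mathcal{C} \neq \mathcal{C}_{n,d}$ guarantees $\bar{\mathcal{C}} \neq \varnothing$, so $I(\bar{\mathcal{C}}) \neq 0$ is generated in degree $d$; thus $\mathrm{indeg}(I(\bar{\mathcal{C}})) = d$ and ``having a $d$-linear resolution'' is exactly the assertion $\reg(I(\bar{\mathcal{C}})) = d$.

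For the base case, I take $\mathcal{C} = \varnothing$. Then $\bar{\mathcal{C}} = \mathcal{C}_{n,d}$, and $I(\mathcal{C}_{n,d})$ has a $d$-linear resolution over any field by the well-known fact recalled in Section~1 (e.g. \cite[Example 2.12]{MNYZ}). This settles the start of the induction.

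For the inductive step, assume $\mathcal{C} \neq \varnothing$. By the definition of chordality there is a simplicial submaximal circuit $e$ of $\mathcal{C}$ such that $\mathcal{C}' := \mathcal{C} \setminus e$ is again chordal. Since $e \in \mathrm{SC}(\mathcal{C})$, at least one circuit of $\mathcal{C}$ contains $e$, so $|\mathcal{C}'| < |\mathcal{C}|$; moreover $\mathcal{C}' \subseteq \mathcal{C} \subsetneq \mathcal{C}_{n,d}$, so $\mathcal{C}' \neq \mathcal{C}_{n,d}$, and the induction hypothesis applies to $\mathcal{C}'$, giving $\reg(I(\bar{\mathcal{C}'})) = d$. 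Now I would invoke Theorem~\ref{main} with the choice $A = \{F \in \mathcal{C} : e \subset F\}$, which makes $\mathcal{D} = \mathcal{C} \setminus A = \mathcal{C} \setminus e = \mathcal{C}'$; part (a) of that theorem then yields $\reg(I(\bar{\mathcal{C}})) = \reg(I(\bar{\mathcal{C}'})) = d$. Equivalently, one can apply Corollary~\ref{Essential of main theorem}(a) to get $\beta_{i,i+j}(I(\bar{\mathcal{C}})) = \beta_{i,i+j}(I(\bar{\mathcal{C}'})) = 0$ for all $i$ and all $j > d$, and conclude $\reg(I(\bar{\mathcal{C}})) = d$ because the ideal is generated in degree $d$. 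This closes the induction.

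The argument is short because essentially all of the analytic content has been absorbed into Theorem~\ref{main}; the definition of chordality was deliberately tailored so that this induction runs. The point most deserving of care is the bookkeeping that makes the recursion well-founded and keeps the hypotheses of Theorem~\ref{main} legitimately applicable at each step: one must verify that $e$ being a genuine submaximal circuit forces a strict drop in $|\mathcal{C}|$ (so the process terminates at $\varnothing$), that the intermediate clutters remain distinct from $\mathcal{C}_{n,d}$ so the nonzero-ideal/initial-degree normalizations persist, and that $\mathcal{D} = \mathcal{C}\setminus e$ genuinely coincides with $\mathcal{C}\setminus A$ for the chosen $A$. I do not expect any serious obstacle beyond confirming these compatibility conditions and noting that the base-case fact about $I(\mathcal{C}_{n,d})$ is indeed characteristic-free.
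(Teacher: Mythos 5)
Your proposal is correct and follows essentially the same route as the paper: both arguments reduce $\mathcal{C}$ to $\varnothing$ by repeatedly deleting simplicial submaximal circuits and invoke Theorem~\ref{main} to transport the linear-resolution property back from $I(\overline{\varnothing}) = I(\mathcal{C}_{n,d})$, the only cosmetic difference being that you organize this as an induction on $|\mathcal{C}|$ using part (a) (regularity), whereas the paper unwinds the whole deletion sequence at once and cites part (b) (index).
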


\begin{proof}
Since $\mathcal{C}$ is chordal, there exists a sequence of submaximal circuits of $\mathcal{C}$, say $e_1, \ldots, e_t$, such that $e_1$ is simplicial submaximal circuit of $\mathcal{C}$, $e_i$ is simplicial submaximal circuit of $\mathcal{C}_{e_1\dots e_{i-1}}$ for all $i>1$, and $\mathcal{C}_{e_1\dots e_t}=\varnothing$. It follows from Theorem~\ref{main}(b) that $I \left( \bar{\mathcal{C}} \right)$ has a $d$-linear resolution if and only if $I \left( \overline{\mathcal{C}_{e_1\dots e_t}} \right)$ has a $d$-linear resolution. This is indeed the case, because $I \left( \overline{\mathcal{C}_{e_1\dots e_t}} \right) = I \left( \bar{\varnothing} \right) = I \left( \mathcal{C}_{n,d} \right)$.
\end{proof}

As mentioned at the beginning of this section, the nice characterization of square-free monomial ideals generated in degree $2$ in terms of chordal graphs (Fr\"oberg's theorem), motivated many mathematicians to generalize the definition of chordal graphs. Some of the most important results are due to Emtander \cite{Emtander}, Woodroofe \cite{Woodroofe}, Connon and Faridi \cite{ConnonFaridi}, and Nevo et al \cite{Nevo} (ordered chronologically). In the remaining of this section, we compare the class $\mathfrak{C}_d$ with the other known families of chordal clutters.

\subsection{Woodroofe's chordal class vs $\mathfrak{C}_d$}
A nice class of clutters whose circuit ideals have a linear resolution over any field  (in fact have linear quotients) has been defined by Woodroofe in \cite{Woodroofe}. This class is named chordal clutters in that text and for the avoidance of ambiguity, we call it W-chordal in this paper. Below, we state the definition of a W-chordal clutter and show that this class is strictly contained in $\mathfrak{C}_d$. To state the definition of W-chordal clutters, first we need the following operations as defined in \cite{Woodroofe}.

Given a clutter $\C$ (not necessarily uniform), there are two ways of removing a vertex that are of interest. Let $v \in V(\C)$. The \textit{deletion}, $\C \setminus v$, is the clutter on the vertex set $V(\C) \setminus \{v\}$, with circuits
$\{F \in \C \colon \quad v \notin F\}$. The \textit{contraction}, $\C/v$, is the clutter on the vertex set $V(\C)\setminus \{v\}$, which the circuits are the minimal sets of $\{ F \setminus \{v\} \colon \quad F \in \C \}$. Thus, $\C \setminus v$ deletes all circuits containing $v$, while $\C/v$ removes $v$ from each circuit containing it and then removes any redundant circuits.

A clutter $\mathcal{D}$ obtained from $\C$ by a sequence of deletions and/or contractions is called a \textit{minor}
of $\C$. It is straightforward to prove that, if $v \neq w$ are vertices, then:
$$(\C \setminus v) \setminus w = (\C \setminus w) \setminus v, \quad (\C/v)/w = (\C/w)/v, \quad (\C \setminus v)/ w = (\C/w) \setminus v.$$

\begin{defn}[W-chordal] \label{Definition of W-Chordal}
Let $\C$ be a clutter. A vertex $v$ of $\C$ is \textit{W-simplicial}, if for every two circuits $F_1$ and $F_2$ of $\C$ that contain $v$, there is a third circuit $F_3$ such that, $F_3 \subseteq (F_1 \cup F_2) \setminus \{v\}$. A clutter $\C$ is called W-\textit{chordal}, if every minor of $\C$ has a W-simplicial vertex.
\end{defn}

As it is mentioned in \cite{Woodroofe}, W-chordal clutters contain a variety of classes of combinatorial objects, including chordal graphs, complete $d$-uniform clutters, matroids, etc.  To show that W-chordal clutters are contained in $\mathfrak{C}_d$, we need the following intermediate steps.

\begin{lem} \label{W-chordals are chordal, first lemma}
Let $\C$ be a $d$-uniform clutter and $e$ be a submaximal circuit of $\C$. Then, $e$ is
simplicial if and only if each vertex of $e$ is W-simplicial in $\C|_{N_\C \left[ e \right]}$.
\end{lem}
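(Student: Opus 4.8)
The plan is to set $N := N_{\C}[e]$ and to exploit that, by definition of the induced subclutter, the circuits of $\C|_N$ are exactly the $d$-subsets of $N$ that belong to $\C$. The observation driving both directions is that $e$ is simplicial precisely when $N$ is a clique, i.e. when \emph{every} $d$-subset of $N$ lies in $\C$, equivalently when $\C|_N$ is the complete $d$-uniform clutter on the vertex set $N$.

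For the forward implication I would assume $e$ is simplicial, so that every $d$-subset of $N$ is a circuit of $\C|_N$. Given a vertex $v \in e$ and two distinct circuits $F_1, F_2 \in \C|_N$ containing $v$, one has $|F_1 \cup F_2| \geq d+1$, hence $|(F_1 \cup F_2)\setminus\{v\}| \geq d$; any $d$-subset $F_3$ of $(F_1\cup F_2)\setminus\{v\}$ is then a $d$-subset of $N$, hence a circuit of $\C|_N$, and since $v \notin F_3$ it is automatically distinct from $F_1, F_2$. This witnesses that $v$ is W-simplicial, and the argument applies to every vertex of $N$, in particular to every vertex of $e$.

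The substantial direction is the converse, which I would prove by showing that every $d$-subset $G \subseteq N$ lies in $\C$, arguing by induction on $k := |e \setminus G|$, the number of vertices of $e$ missing from $G$. Writing $N = e \cup \{c_1, \ldots, c_m\}$ with each $e \cup \{c_i\} \in \C$, the base case $k = 0$ is immediate, since then $G = e \cup \{c_i\}$ for some $i$. For $k \geq 1$, choose a missing vertex $v \in e \setminus G$; then $G \cup \{v\}$ has $d+1$ elements and $G$ contains exactly $k+1 \geq 2$ of the $c_i$. Picking two distinct such vertices $a, b$ among them, I set $G_1 = (G \cup \{v\})\setminus\{a\}$ and $G_2 = (G \cup \{v\})\setminus\{b\}$: these are distinct $d$-subsets of $N$, each containing $v$ and each missing only $k-1$ vertices of $e$, so $G_1, G_2 \in \C$ by the induction hypothesis. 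Applying W-simpliciality of $v$ in $\C|_N$ to the pair $G_1, G_2$ yields a circuit $F_3 \subseteq (G_1 \cup G_2)\setminus\{v\} = G$; since $\C$ is $d$-uniform and $|G| = d$, necessarily $F_3 = G$, whence $G \in \C$. The induction then shows $N$ is a clique, i.e. $e$ is simplicial.

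The main obstacle is the inductive step of the converse: one must manufacture, from the target $d$-set $G$, two strictly ``simpler'' circuits that feed into the W-simplicial hypothesis and recover exactly $G$. The decisive choice is to delete two of the \emph{neighbor} vertices $c_a, c_b$ (rather than vertices of $e$) when forming $G_1$ and $G_2$, which is what makes the inductive quantity $|e \setminus G|$ strictly decrease while keeping $v$ in both sets. Verifying the cardinality bookkeeping, namely that $G$ meets $\{c_1,\ldots,c_m\}$ in exactly $k+1 \geq 2$ vertices, is the point that must be checked with care.
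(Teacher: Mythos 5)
Your proposal is correct and takes essentially the same approach as the paper: the forward direction is the easy clique observation, and the converse is the same induction, forming $G_1=(G\cup\{v\})\setminus\{a\}$ and $G_2=(G\cup\{v\})\setminus\{b\}$ for a vertex $v\in e\setminus G$ and two elements $a,b\in G\setminus e$, then applying W-simpliciality of $v$ to recover $G$. The only cosmetic difference is that the paper inducts on $|G\setminus e|$ while you induct on $|e\setminus G|$; these differ by one, so the arguments coincide.
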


\begin{proof}
The implication ($\Rightarrow$) is straightforward. For the other direction, assume that each vertex of $e$ is W-simplicial in $\C|_{N_\C \left[ e \right]}$. We must show that each $d$-subset of $N_\C[e]$ is a circuit in $\C$. Suppose that $e = \left\{ v_1, \ldots, v_{d-1} \right\}$ and let $G$ be a $d$-subset of $N_\C[e]$. The proof is by induction on $i = |G \setminus e|$. For $i = 1$, we have then $e \subset G$, and $G$ is a circuit by definition of $N_\C[e]$. Suppose that $i > 1$. Without loss of generality, we may assume that $G = \left\{ v_i, v_{i+1}, \ldots, v_{d-1}, w_1, \ldots ,w_i \right\}$. Then, the sets $G_1 = \left\{ v_1 \right\} \cup \left( G \setminus \left\{ w_1 \right\} \right)$ and $G_2 = \left\{ v_1 \right\} \cup \left( G \setminus \left\{ w_2 \right\} \right)$ are circuits in $\C$ by the induction hypothesis. On the other hand, $v_1 \in G_1 \cap G_2$ and $v_1$ is W-simplicial and therefore $G = \left( G_1 \cup G_2 \right) \setminus \left\{ v_1 \right\}$ is a circuit in $\C$.
\end{proof}

\begin{lem} \label{W-chordals are chordal, second lemma}
Let $\C$ be a $d$-uniform clutter, and $v_1$ be a W-simplicial vertex such that $\C|_{N_\C \left[ \left\{ v_1 \right\} \right]}$ is W-chordal. Then, there exists a simplicial submaximal circuit $e$ such that $v_1 \in e$.
\end{lem}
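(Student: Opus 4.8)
The plan is to induct on the uniformity $d$, passing to the link of $v_1$ in order to drop the dimension by one. First I would reduce to the case $\C=\C|_{N_\C[\{v_1\}]}$: if $e$ is a submaximal circuit with $v_1\in e$, then $e\subseteq N_\C[\{v_1\}]$ and $N_\C[e]\subseteq N_\C[\{v_1\}]$, so $e$ is simplicial in $\C$ exactly when it is simplicial in $H:=\C|_{N_\C[\{v_1\}]}$; moreover W-simpliciality of $v_1$ is inherited by the induced subclutter $H$, and $H$ is W-chordal by hypothesis. Hence I may assume $\C=H$ is W-chordal with $N_\C[\{v_1\}]=V(\C)$, i.e. every vertex is adjacent to $v_1$. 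For the base case $d=2$, Lemma~\ref{W-chordals are chordal, first lemma} applied to $e=\{v_1\}$ shows that the W-simplicial vertex $v_1$ is itself a simplicial submaximal circuit.

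For $d>2$ I would introduce the link $L:=\{F\setminus\{v_1\}\colon\ v_1\in F\in\C\}$, a $(d-1)$-uniform clutter on $V(\C)\setminus\{v_1\}$, and apply the inductive hypothesis to it. This forces the main obstacle: \emph{the link $L$ must be shown to be W-chordal}. One cannot simply quote ``minors of W-chordal clutters are W-chordal'', because $L$ is typically not a minor of $\C$; rather, using $N_\C[\{v_1\}]=V(\C)$ one checks that $L$ coincides with the subclutter of minimum-cardinality circuits of the contraction $\C/v_1$. Since $\C/v_1$ is a genuine minor it is W-chordal, but restricting a clutter to its minimal circuits is not a minor operation. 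The plan is therefore to establish the auxiliary statement that \emph{the subclutter of minimal circuits of a W-chordal clutter is again W-chordal}; together with the identification $L=(\C/v_1)_{\min}$ this yields the W-chordality of $L$. I expect this auxiliary statement to be the delicate part, most plausibly proved by a simultaneous induction with the lemma itself.

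Granting that $L$ is W-chordal, it has a W-simplicial vertex $w$, and $L|_{N_L[\{w\}]}$ is W-chordal as an induced subclutter (hence minor) of $L$; the inductive hypothesis then yields a simplicial submaximal circuit $e'$ of $L$ with $w\in e'$. I would set $e:=\{v_1\}\cup e'$ and check it works. It is a submaximal circuit because $e'\subseteq S$ for some $S\in L$, so $e\subseteq\{v_1\}\cup S\in\C$. A direct computation, using that a $d$-set containing $v_1$ lies in $\C$ iff its $v_1$-deletion lies in $L$, gives $N_\C[e]=\{v_1\}\cup N_L[e']$. To see that this set is a clique of $\C$, I split the $d$-subsets $T\subseteq N_\C[e]$ into two cases. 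If $v_1\in T$, then $T\setminus\{v_1\}$ is a $(d-1)$-subset of the clique $N_L[e']$ of $L$, hence a circuit of $L$, so $T\in\C$. If $v_1\notin T$, choose distinct $t_1,t_2\in T$; then $\{v_1\}\cup(T\setminus\{t_1\})$ and $\{v_1\}\cup(T\setminus\{t_2\})$ are circuits of $\C$ through $v_1$ (again since $N_L[e']$ is a clique of $L$), their union minus $v_1$ equals $T$, and the W-simpliciality of $v_1$ produces a circuit contained in $T$, which for cardinality reasons must be $T$ itself. Thus $N_\C[e]$ is a clique, $e$ is simplicial, and $v_1\in e$.

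In short, the construction of $e$ out of a simplicial submaximal circuit of the link, together with its verification through Lemma~\ref{W-chordals are chordal, first lemma} and the exchange coming from the W-simpliciality of $v_1$, are routine; the real work is concentrated in proving that the link is W-chordal, equivalently that the minimal circuits of a W-chordal clutter again form a W-chordal clutter.
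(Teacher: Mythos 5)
Your construction and verification of $e=\{v_1\}\cup e'$ from a simplicial submaximal circuit $e'$ of the link are correct (the computation $N_\C[e]=\{v_1\}\cup N_L[e']$ and the two-case clique check, using the exchange property of the W-simplicial vertex $v_1$ when $v_1\notin T$, all go through). But the argument has a genuine gap exactly where you flag it: the W-chordality of the link $L=\{F\setminus\{v_1\}\colon v_1\in F\in\C\}$ is never established. As you observe, $L$ is not a minor of $\C$ --- it is the subcollection of minimum-cardinality circuits of the contraction $\C/v_1$ --- so it does not inherit W-chordality for free, and the auxiliary statement you propose (``the minimal circuits of a W-chordal clutter form a W-chordal clutter'') is left entirely unproved, with only the hope that it could be handled ``by a simultaneous induction with the lemma itself.'' Since your entire induction on $d$ hinges on applying the lemma to $L$, and since the inductive hypothesis concerns the lemma for $(d-1)$-uniform clutters rather than this structural claim about contractions, the proposed simultaneous induction has no visible mechanism; the proof is incomplete at its load-bearing step.

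The paper's proof is designed precisely to avoid forming the link. It never leaves the category of genuine minors: it selects $v_2$ as a W-simplicial vertex of $\left(\C|_{N_\C[\{v_1\}]}\right)/v_1$, then $v_3$ as a W-simplicial vertex of $\left(\C|_{N_\C[\{v_1,v_2\}]}\right)/v_1/v_2$, and so on --- each of these is a minor of the W-chordal clutter $\C|_{N_\C[\{v_1\}]}$, so a W-simplicial vertex is guaranteed to exist. Having assembled $e=\{v_1,\ldots,v_{d-1}\}$, it proves directly (the ``Claim'') that $B\cup\{v_1\}$ is a circuit for every $(d-1)$-subset $B$ of $N_\C[e]\setminus\{v_1\}$, deduces from this that every $v_i$ is W-simplicial in $\C|_{N_\C[e]}$, and concludes via Lemma~\ref{W-chordals are chordal, first lemma}. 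If you want to salvage your approach, you would need to either prove your auxiliary statement about minimal circuits of W-chordal clutters or replace the appeal to the link by an argument, as in the paper, that only ever invokes the W-chordality hypothesis on actual minors of $\C|_{N_\C[\{v_1\}]}$.
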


\begin{proof}
The clutter $\left( \C|_{N_\C \left[ \left\{ v_1 \right\} \right]} \right) / v_1$ is a minor of $\C|_{N_\C \left[ \left\{ v_1 \right\} \right]}$ and has a W-simplicial vertex, namely $v_2$. In this case, $v_2$ is also a W-simplicial vertex in $\C_1 := \C|_{N_\C \left[ \left\{ v_1, v_2 \right\} \right]} / v_1$. Moreover, the clutter $\C_1/ v_2$ is again a minor of $\C|_{N_\C \left[ \left\{ v_1 \right\} \right]}$ and has a W-simplicial vertex as $v_3$ which is a W-simplicial vertex in $\C_2 := \C|_{N_\C \left[ \left\{ v_1, v_2, v_3 \right\} \right]} /v_1 / v_2$ too. By this process, we obtain a set of vertices $e := \left\{ v_1, v_2, \ldots, v_{d-1} \right\}$ which is a submaximal circuit of $\C$. We prove that for each $1 \leq i \leq d - 1$, the vertex $v_i$ is W-simplicial in $\C|_{N_\C \left[ e \right]}$. Then, by Lemma~\ref{W-chordals are chordal, first lemma}, we may deduce that $e$ is simplicial. To do this, first we claim that:
\begin{itemize}
\item[]\textbf{Claim. }
For any $(d-1)$-subset $B = \left\{ w_1, \ldots, w_{d-1} \right\}$ of $N_\C \left[ e \right] \setminus \left\{ v_1 \right\}$, the set $B \cup \left\{ v_1 \right\}$ is a circuit in $\C|_{N_\C \left[ e \right]}$.
\end{itemize}
\begin{itemize}
\item[] \textit{Proof of the claim.} Without loss of generality we may assume that $B \cap e = \{ v_1, \ldots, v_{t-1} \}$ and $B \setminus e = \left\{ w_t, w_{t+1}, \ldots, w_{d-1} \right\}$, for some integer $1 \leq t \leq d-1$. If $|B \setminus e | = 1$, then $\left\{ v_1 \right\} \cup B = e \cup \left\{ w_{d-1} \right\}$ which is obviously a circuit in $\C|_{N_\C \left[ e \right]}$.
Let $|B \setminus e | > 1$. Then, $e \cup \left\{ w_{d-1} \right\}$ and $e \cup \left\{ w_{d-2} \right\}$ are circuits in $\C|_{N_\C \left[ e \right]}$. Therefore, $\left\{ v_{d-1}, w_{d-1} \right\}$ and $\left\{ v_{d-1}, w_{d-2} \right\}$ are circuits in the minor $\C|_{N_\C \left[ e \right]} / v_1 / v_2 / \cdots / v_{d-2}$. In this minor, $v_{d-1}$ is also W-simplicial and hence $\left\{ w_{d-1}, w_{d-2} \right\}$ is a circuit in this minor. Therefore, $\left\{ v_1, \ldots, v_{d-2}, w_{d-1}, w_{d-2} \right\}$ is a circuit in $\C|_{N_\C \left[ e \right]}$. Similarly $\left\{ v_1, \ldots, v_{d-2}, w_{d-1}, w_{d-3} \right\}$ is a circuit in $\C|_{N_\C \left[ e \right]}$. Thus, $\left\{ v_{d-2}, w_{d-1}, w_{d-3} \right\}$ and $\left\{ v_{d-2}, w_{d-1}, w_{d-2} \right\}$ are circuits in the minor $\C|_{N_\C \left[ e \right]} / v_1 / \cdots / v_{d-3}$ which has $v_{d-2}$ as a W-simplicial vertex. Now, as the above argument, we conclude that $\left\{ v_1, \ldots, v_{d-3}, w_{d-1}, w_{d-2}, w_{d-3} \right\}$ is a circuit in $\C|_{N_\C \left[ e \right]}$. Continuing this process, we obtain the conclusion.
\end{itemize}

Now, we are ready to show that, for each $1 \leq i \leq d - 1$, the vertex $v_i$ is W-simplicial in $\C|_{N_\C \left[ e \right]}$. Note that $v_1$ is a W-simplicial vertex of $\C$ and hence is a W-simplicial in $\C|_{N_\C \left[ e \right]}$. For $1 < i \leq d-1$, let $E$ and $F$ be two circuits in $\C|_{N_\C \left[ e \right]}$ containing $v_i$. 
\begin{itemize}

\item If $v_1 \notin E \cup F$, then $\left\{ v_1 \right\} \cup \left( E \setminus \left\{ v_i \right\} \right)$ and $\left\{ v_1 \right\} \cup \left( F \setminus \left\{ v_i \right\} \right)$ are circuits containing the W-simplicial vertex $v_1$ and therefore, there is a circuit contained in $\left( E \cup F \right) \setminus \left\{ v_i \right\}$.
\item If $v_1 \in E \cap F$, then for each ($d-1$)-subset $D$ of $\left( E \cup F \right) \setminus \left\{ v_1, v_i \right\}$, we have $D \cup \left\{ v_1 \right\}$ is a circuit in $\C|_{N \left[ e \right]}$ which is also contained in $\left( E \cup F \right) \setminus \left\{ v_i \right\}$.
\item If $v_1 \in E$ and $v_1 \notin F$, then there exists $u \in F \setminus E$ such that $\left( E \setminus \left\{ v_i \right\} \right) \cup \left\{ u \right\}$ and $\left( F \setminus \left\{ v_i \right\} \right) \cup \left\{ v_1 \right\}$ are circuits in $\C$ which contain $v_1$. Hence, there is a circuit inside $\left( E \cup F \right) \setminus \left\{ v_i \right\}$.
\end{itemize}
\end{proof}

The reader may notice that, in the middle of the proof of Lemma~\ref{W-chordals are chordal, second lemma}, it is actually proved the following statement:
\begin{cor} \label{W-chordal are chordal-third lemma}
Let $\C$ be a $d$-uniform clutter and $v_1, \ldots, v_{d-1}$ be a sequence of vertices of $\C$  such that:
\begin{itemize}
\item[\rm (i)] $v_1$ is W-simplicial in $\C$;
\item[\rm (ii)] $v_i$ is W- simplicial in $\left( \C|_{N_\C \left[ \{v_1, \ldots, v_{i-1} \} \right]} \right) / v_1 / \cdots/ v_{i-1}$, for $i=2, \ldots, d-1$;
\end{itemize}
Then $e= \{v_1, \ldots, v_{d-1} \}$ is a simplicial submaximal circuit in $\C$.
\end{cor}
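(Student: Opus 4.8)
The plan is to recognise that the conclusion is essentially the content of the portion of the proof of Lemma~\ref{W-chordals are chordal, second lemma} that follows the construction of the sequence $v_1, \ldots, v_{d-1}$; the only genuinely new point is to extract that argument so that it rests solely on hypotheses (i) and (ii), rather than on the W-chordality of $\C|_{N_\C[\{v_1\}]}$, which in the lemma served only to \emph{produce} the sequence. Once a sequence satisfying (i) and (ii) is handed to us, existence of W-simplicial vertices in the intermediate minors is no longer needed.

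First I would check that $e$ is genuinely a submaximal circuit. Condition (ii) with $i = d-1$ presupposes that $v_{d-1}$ is a vertex of $\left(\C|_{N_\C[\{v_1,\ldots,v_{d-2}\}]}\right)/v_1/\cdots/v_{d-2}$, whose vertex set is $N_\C[\{v_1,\ldots,v_{d-2}\}] \setminus \{v_1,\ldots,v_{d-2}\}$; hence $\{v_1,\ldots,v_{d-2},v_{d-1}\} \subseteq F$ for some $F \in \C$ by the definition of $N_\C[\,\cdot\,]$, and since $|F| = d$ this forces $e \subset F$. Thus $e$ is a submaximal circuit, and by Lemma~\ref{W-chordals are chordal, first lemma} it suffices to prove that each $v_i$ is W-simplicial in $\mathcal{E} := \C|_{N_\C[e]}$.

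Next comes the transfer step. Since $e \supseteq \{v_1,\ldots,v_{i-1}\}$ we have $N_\C[e] \subseteq N_\C[\{v_1,\ldots,v_{i-1}\}]$, and using the identity $(\C|_W)/S = (\C/S)|_{W\setminus S}$ (valid because $S \subseteq W$) together with the commutation rules for deletion and contraction, I would argue that $\mathcal{E}/v_1/\cdots/v_{i-1}$ is an \emph{induced subclutter} of $\left(\C|_{N_\C[\{v_1,\ldots,v_{i-1}\}]}\right)/v_1/\cdots/v_{i-1}$. Because W-simpliciality is inherited by induced subclutters (a witnessing circuit $F_3 \subseteq (F_1 \cup F_2)\setminus\{v\}$ in the larger clutter automatically lies in the smaller vertex set, hence in the induced subclutter), hypotheses (i) and (ii) yield that $v_1$ is W-simplicial in $\mathcal{E}$ and that $v_i$ is W-simplicial in $\mathcal{E}/v_1/\cdots/v_{i-1}$ for $2 \le i \le d-1$. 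This places us exactly in the situation treated in the proof of Lemma~\ref{W-chordals are chordal, second lemma}, now working entirely inside $\mathcal{E}$.

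Finally I would reproduce, inside $\mathcal{E}$, the inductive Claim and the three-case analysis of that proof: first establish by induction on $|B\setminus e|$ that $B \cup \{v_1\}$ is a circuit of $\mathcal{E}$ for every $(d-1)$-subset $B$ of $N_\C[e]\setminus\{v_1\}$ (using that $e\cup\{w\}$ is a circuit for each $w \in N_\C[e]\setminus e$ and the W-simpliciality of the $v_j$ in the successive contractions), and then, for $1 < i \le d-1$ and any two circuits $E,F$ of $\mathcal{E}$ through $v_i$, split into the cases $v_1 \notin E\cup F$, $v_1 \in E\cap F$, and $v_1 \in E\setminus F$, in each case producing via the Claim two circuits through $v_1$ and invoking the W-simpliciality of $v_1$ to exhibit a circuit inside $(E\cup F)\setminus\{v_i\}$. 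Together with Lemma~\ref{W-chordals are chordal, first lemma}, this shows $e$ is simplicial. The main obstacle is the transfer step: making precise that the W-simpliciality furnished by (i) and (ii)—which lives in contractions of the larger neighbourhoods $N_\C[\{v_1,\ldots,v_{i-1}\}]$—survives the restriction down to $N_\C[e]$. Once the commutation identity and the inheritance of W-simpliciality under induced subclutters are in hand, the rest is the bookkeeping of the inductive Claim.
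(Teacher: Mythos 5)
Your proposal is correct and takes essentially the same route as the paper, which proves this corollary simply by noting that it is established in the course of the proof of Lemma~\ref{W-chordals are chordal, second lemma} (the inductive Claim plus the three-case analysis, concluded via Lemma~\ref{W-chordals are chordal, first lemma}). Your explicit justification of the transfer of W-simpliciality from the contractions of $\C|_{N_\C[\{v_1,\ldots,v_{i-1}\}]}$ down to those of $\C|_{N_\C[e]}$ makes precise a step the paper leaves implicit, but the underlying argument is the same.
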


\begin{prop}[\textbf{W-chordals are chordal}] \label{W-chordals are chordal}
If $\C$ is a $d$-uniform W-chordal clutter, then $\C \in \mathfrak{C}_d$.
\end{prop}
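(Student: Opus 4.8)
The plan is to argue by induction on the number of circuits $|\C|$. If $\C=\varnothing$ there is nothing to prove, so suppose $\C\neq\varnothing$. The argument then divides into two tasks: first, produce a single simplicial submaximal circuit $e$ of $\C$; second, show that the $d$-uniform clutter $\C\setminus e$ is again W-chordal, so that the induction hypothesis applies to it. Granting both, $\C\setminus e\in\mathfrak{C}_d$ by induction, and since $e$ is simplicial the recursive definition of chordality yields $\C\in\mathfrak{C}_d$. The induction is well founded because $e\in\mathrm{SC}(\C)$ guarantees that at least the circuits containing $e$ are discarded, so $|\C\setminus e|<|\C|$.

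For the first task I would exploit the lemmas already in place. Since $\C$ is a minor of itself, W-chordality furnishes a W-simplicial vertex $v_1$ of $\C$. Next, $\C|_{N_\C[\{v_1\}]}$ is obtained from $\C$ by deleting every vertex outside $N_\C[\{v_1\}]$, hence is a minor of $\C$; as a minor of a minor is again a minor, every minor of $\C|_{N_\C[\{v_1\}]}$ is a minor of $\C$ and therefore has a W-simplicial vertex, i.e.\ $\C|_{N_\C[\{v_1\}]}$ is itself W-chordal. Lemma~\ref{W-chordals are chordal, second lemma} now applies and delivers a simplicial submaximal circuit $e$ with $v_1\in e$.

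The second task is the heart of the matter and I expect it to be the main obstacle: deletion of a submaximal circuit removes \emph{circuits}, not vertices, so $\C\setminus e$ is not literally a minor of $\C$ and W-chordality cannot simply be inherited. To prove $\C\setminus e$ is W-chordal one must show every minor $M$ of $\C\setminus e$ has a W-simplicial vertex. I would compare $M=((\C\setminus e)\setminus A)/B$ (deletions along $A$, contractions along $B$) with the corresponding minor $N=(\C\setminus A)/B$ of $\C$, which has a W-simplicial vertex since $\C$ is W-chordal. The only circuits of $\C$ absent from $\C\setminus e$ are those containing $e$; hence if some vertex of $e$ lies in $A$, then every circuit containing $e$ is already discarded by the deletion of $A$, the two generating families coincide, $M=N$, and we are done. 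The delicate case is when $e$ survives the minor operations; here one uses that $v_1\in e$ remains W-simplicial after deleting a submaximal circuit through it (if $F_1,F_2\in\C\setminus e$ both contain $v_1$, the witness $F_3\subseteq (F_1\cup F_2)\setminus\{v_1\}$ supplied by $\C$ omits $v_1$, hence omits $e$, so $F_3\in\C\setminus e$), and then tracks how this property transports through the deletions and contractions defining $M$. A useful structural fact that localizes the analysis is that every circuit through $v_1$ is contained in $N_\C[\{v_1\}]$, so the entire discussion takes place inside the W-chordal clutter $\C|_{N_\C[\{v_1\}]}$.

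Once the second task is settled, the conclusion is immediate: $\C\setminus e$ is $d$-uniform and W-chordal with strictly fewer circuits, so by induction $\C\setminus e\in\mathfrak{C}_d$; as $e$ is a simplicial submaximal circuit of $\C$, the definition of $\mathfrak{C}_d$ gives $\C\in\mathfrak{C}_d$, completing the induction.
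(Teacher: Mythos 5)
There is a genuine gap at what you yourself identify as ``the heart of the matter'': the claim that $\C\setminus e$ is again W-chordal is never established, and your sketch does not actually reach the difficulty. Your comparison of a minor $M=((\C\setminus e)\setminus A)/B$ with $N=(\C\setminus A)/B$ disposes of the case where a vertex of $e$ is \emph{deleted}, but says nothing when a vertex of $e$ is \emph{contracted} or when $e$ survives untouched. In those cases the circuit families of $M$ and $N$ genuinely differ (every circuit of $\C$ containing $e$ contributes to $N$ but not to $M$, and after a contraction the removal of such circuits can expose new minimal sets), so a W-simplicial vertex of $N$ gives no information about $M$. The one concrete argument you offer --- that $v_1$ stays W-simplicial in $\C\setminus e$ because any witness $F_3\subseteq(F_1\cup F_2)\setminus\{v_1\}$ omits $v_1$ and hence omits $e$ --- is correct but only produces a W-simplicial vertex in $\C\setminus e$ itself, not in its arbitrary minors; in particular it is vacuous for minors in which $v_1$ has been contracted or is no longer W-simplicial. ``Tracking how this property transports'' is precisely the unproved step, and it is not clear that the statement ``$\C\setminus e$ is W-chordal'' is even true in the generality you need; the paper conspicuously avoids asserting it.

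The paper's actual route sidesteps this entirely. After producing, exactly as you do, a W-simplicial vertex $v$ and a simplicial submaximal circuit $e_1\ni v$ via Lemma~\ref{W-chordals are chordal, second lemma}, it does \emph{not} try to re-enter the W-chordal class after one deletion. Instead it keeps deleting: using Corollary~\ref{W-chordal are chordal-third lemma} it shows that suitable minors of $\C$ (restrictions to neighborhoods followed by contractions, which \emph{are} minors and hence still have W-simplicial vertices) supply further simplicial submaximal circuits $e_2,e_3,\dots$ through $v$, first exhausting all circuits containing $e_1\setminus\{w_{d-2}\}$, then those containing the next truncation, and so on, until $v$ becomes isolated. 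The clutter reached at that point is $\C\setminus v$ (vertex deletion), which is a genuine minor of $\C$ and therefore W-chordal; the induction is on the vertex set, not on the number of circuits. If you want to salvage your circuit-count induction you would have to prove the closure of W-chordality under deletion of these particular simplicial submaximal circuits, which is a substantial claim requiring its own argument; otherwise you should restructure the proof along the paper's lines.
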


\begin{proof}
Since $\C$ is W-chordal, $\C$ has a W-simplicial vertex $v$. Let $e_1$ be a simplicial submaximal circuit as in the proof of Lemma~\ref{W-chordals are chordal, second lemma}, which contains $v$. It is easy to see that, $v$ is W-simplicial in $\C \setminus e_1$, if $v$ is still a non-isolated vertex in $\C \setminus e_1$. Let $e_1 = \left\{ v, w_1, w_2, \ldots, w_{d-2} \right\}$ and $e'_1 = \left\{ v, w_1, \ldots, w_{d-3} \right\}$. Since $e'_1 \cup \{ w_{d-2} \} = e_1$, there is no circuit in $\C\setminus e_1$ containing $e'_1 \cup \{ w_{d-2} \}$. Hence,
\begin{align*}
 \left( \left( \C \setminus e_1 \right)|_{N_{\C \setminus e_1} \left[ e'_1 \right]} \right) / v / w_1 / \cdots / w_{d-3}  &= 
\left( \left( \left( \C \setminus e_1 \right)|_{N_{\C \setminus e_1} \left[ e'_1 \right]} \right) / v / w_1 / \cdots / w_{d-3} \right) \setminus w_{d-2} \\
& = \left( \left( \C|_{N_{\C \setminus e_1} \left[ e'_1 \right]} \right) / v / w_1 / \cdots / w_{d-3} \right) \setminus w_{d-2}
\end{align*}
is a minor of $\C$. Pick a W-simplicial vertex $w'_{d-2}$ in this minor. We claim that $e_2 = \left\{ v, w_1, w_2, \ldots, w_{d-3}, w'_{d-2} \right\}$ is a simplicial submaximal circuit in $\C \setminus e_1$. 
\begin{itemize}
\item[] \textit{Proof of the claim}. We show that the vertices of $e_2$ satisfy the conditions of Corollary~\ref{W-chordal are chordal-third lemma}. Note that, by our choice of $v$ and $w'_{d-2}$, these vertices satisfy in Corollary~\ref{W-chordal are chordal-third lemma}. Now, let:
\begin{align*}
& \mathcal{D}_1 := \left( \left( \C \setminus e_1 \right)|_{N_{\C \setminus {e_1}} \left[ \{ v \}\right]} \right)/ v, 
& \mathcal{D}'_1 := \left( \C |_{N_{\C} \left[ \{ v \} \right]} \right)/ v, 
\end{align*}
and for $i>1$, let 
\begin{align*}
& \mathcal{D}_i := \left( \left( \C \setminus {e_1} \right)|_{N_{\C \setminus {e_1}} \left[ \{ v, w_1, \ldots, w_{i-1} \}\right]} \right) / v / w_1/ \cdots/ w_{i-1}, \\
& \mathcal{D}'_i := \left( \C |_{N_{\C} \left[ \{ v, w_1, \ldots, w_{i-1} \} \right]} \right) / v / w_1/ \cdots/ w_{i-1}.
\end{align*}
It is enough to show that $w_i$ is a W-simplicial vertex in $\mathcal{D}_i$, for $i=1, \ldots, d-3$. Let $F_1$ and $F_2$ be circuits in $\mathcal{D}_i$ which contain $w_i$. Since $w_i$ is W-simplicial in $\mathcal{D}'_i$, there exists a circuit $F_3 \in \mathcal{D}'_i$ such that $F_3 \subset \left( F_1 \cup F_2 \right) \setminus \{w_i\}$. Then, $F_3 \subset N_{\C \setminus {e_1}} \left[ \{ v, w_1, \ldots, w_{i-1} \}\right]$, $w_i \notin F_3$ and $F_3 \cup A \in \C |_{N_{{\C} \setminus {e_1}} \left[ \{ v, w_1, \ldots, w_{i-1} \} \right]}$, for a subset $A \subseteq \{ v, w_1, \ldots, w_{i-1} \}$. Since $w_i \notin F_3$, we conclude that ${e_1} \nsubseteq F_3 \cup A$. Hence 
\[
F_3 \cup A \in \left( \C \setminus {e_1} \right)|_{N_{\C \setminus {e_1}} \left[ \{ v, w_1, \ldots, w_{i-1} \} \right]}.
\]
This means that $F_3 \in \mathcal{D}_i$ is a circuit which is contained in $\left( F_1 \cup F_2 \right) \setminus \{w_i\}$, as desired.
\end{itemize}

Continuing this process, by removing all simplicial submaximal circuits containing $e'_1$, after a finite number of steps, say $r$, the clutter $\C_{e_1 \ldots e_r}$ has no circuit containing $e'_1$. Now we put $e'_2 = e'_1 \setminus \{w_{d-3}\}$ and we do the same as above, to obtain a subclutter of $\C_{e_1 \ldots e_r}$ which has no circuit containing $e'_2$. Repeating this argument,  finally, the vertex $v$ will be an isolated vertex and the remaining clutter is $\C \setminus v$ which is a $d$-uniform W-chordal clutter. Now induction completes the proof.
\end{proof}

The following example shows that the containment in Proposition~\ref{W-chordals are chordal} is strict.

\begin{ex}
Let $\C$ be the following $3$-uniform clutter with vertex set $\left\{ 1, \ldots, 5 \right\}$:
\[
\C=\left\{ 123, 134, 235, 345 \right\},
\]
observing $\left\{1, 2, 3 \right\}$ by $123$ and so on.

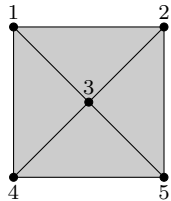
\begin{figure}[H]
\centering
\begin{tikzpicture}[line cap=round,line join=round,>=triangle 45,x=1.0cm,y=1.0cm]
\fill[color=gray,fill=gray,fill opacity=0.4] (1.99,0.99) -- (3.99,0.99) -- (3.99,2.99) -- (1.99,2.99) -- cycle;
\draw (2.,1.)-- (4.,1.);
\draw (4.,1.)-- (4.,3.);
\draw (4.,3.)-- (2.,3.);
\draw (2.,3.)-- (2.,1.);
\draw (2.,3.)-- (4.,1.);
\draw (4.,3.)-- (2.,1.);
\begin{scriptsize}
\draw [fill=black] (2.,3.) circle (1.5pt);
\draw[color=black] (2.0,3.2) node {$1$};
\draw [fill=black] (4.,3.) circle (1.5pt);
\draw[color=black] (4.0,3.2) node {$2$};
\draw [fill=black] (3.,2.) circle (1.5pt);
\draw[color=black] (3.0,2.2) node {$3$};
\draw [fill=black] (2.,1.) circle (1.5pt);
\draw[color=black] (2.0,0.8) node {$4$};
\draw [fill=black] (4.,1.) circle (1.5pt);
\draw[color=black] (4.0,0.8) node {$5$};
\end{scriptsize}
\end{tikzpicture}
\caption{A clutter $\C$ in $\mathfrak{C}_3$ which is not W-chordal}
\end{figure}
Then $\C$ does not have any W-simplicial vertex, so is not W-chordal. But it is clear that $\C \in \mathfrak{C}_3$. Hence the class of W-chordal clutters is strictly contained in $\mathfrak{C}_d$.
\end{ex}

\subsection{Emtander's chordal class vs $\mathfrak{C}_d$}
Towards partial generalization of Fr\"oberg's theorem, E.~Emtander in \cite{Emtander} has also defined several concepts of chordality (called triangulated, triangulated*, chordal and having perfect elimination ordering) by different approaches and he showed that all of these concepts are the same \cite[Theorem-definition 2.1]{Emtander}. He also made a good discussion about different attempts on defining chordal clutters in section 2.2 of \cite{Emtander}. Next, he defined the notion of ``generalized chordal clutter'' as a generalization of his previous objects, celebrating by showing that such clutters admit  linear resolutions over any field $K$. In the following, we show that the class of generalized chordal clutters as defined by Emtander is strictly contained in $\mathfrak{C}_d$.

\begin{defn}[{\cite[Definition 4.1]{Emtander}}] \label{Definition of E-chordal}
A \textit{generalized chordal clutter} is a $d$-uniform clutter, obtained
inductively as follows:
\begin{itemize}
\item[(i)] $\C_{n,d}$ is a generalized chordal clutter for $n, d\in \mathbb{N}$;
\item[(ii)] If $\C$ is generalized chordal, then so is $\C \cup_{\C_{i,d}} \C_{n,d}$, for $0 \leq i < n$ (This we think of glueing $\C_{n,d}$ to $\C$ by identifying $\C_{n,d}$ with the corresponding part, $\C_{i,d}$ of $\C$);
\item[(iii)] If $\C$ is generalized chordal, $F \subset V(\C)$ with $|F|=d$ and there exists $e\subset F$, $|e|=d-1$, with $e \notin \mathrm{SC}(\C)$, then $\C \cup {F}$ is generalized chordal.
\end{itemize}
\end{defn}
Let us denote by `E-chordal', the class of generalized chordal clutters as defined by Emtender.

\begin{rem}
A $d$-uniform clutter $\C$ which is obtained inductively by (i)-(ii) of Definition~\ref{Definition of E-chordal}, is called chordal by Emtander while chordal in this paper, is an element of $\mathfrak{C}_d$. Emtander showed that, if $\C$ is an E-chordal clutter, then $I \left( \bar{\C} \right)$ has a linear resolution over any field (\cite[Theorem 4.1]{Emtander}). 
In particular, if $\C$ is a chordal clutter in Emtander's sense, then $I \left( \bar{\C} \right)$ has a linear resolution over any field. 
In his later work, he showed that for a chordal clutter $\C$, in fact, the ideal $I \left( \bar{\C} \right)$ has linear quotients \cite{Emtander2} while it is still an open question whether this is true for generalized chordal clutters as well.
\end{rem}

In the following, we show that, E-chordal clutters are strictly contained in $\mathfrak{C}_d$. To do this, first we prove the following lemma.

\begin{lem} \label{complete}
Let $v \in [n]$ and $T$ be the set $\left\{ e \subset [n] \colon \quad |e|=d-1, \; v \in e \right\}$. Then, by a suitable ordering of the elements of $T$, $T = \left\{ e_1, \ldots, e_m \right\}$, we have 
$$e_i \in \mathrm{Simp} \left( \left(\C_{n,d} \setminus e_1 \right) \setminus \cdots  \setminus e_{i-1}  \right),$$
for $2 \leq i \leq m$.
\end{lem}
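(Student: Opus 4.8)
## Proof proposal for Lemma~\ref{complete}

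The plan is to analyze what happens to the complete clutter $\C_{n,d}$ as we successively delete the $(d-1)$-subsets containing the fixed vertex $v$. The key observation is that deletion of a submaximal circuit $e$ removes from the clutter exactly those $d$-circuits $F$ with $e \subset F$, and that in the remaining clutter the closed neighborhood $N[e']$ of a submaximal circuit $e'$ shrinks. So I would track, at each stage, which vertices remain ``available'' to complete a given $(d-1)$-set into a circuit, and verify the simpliciality condition directly: $e_i$ is simplicial in the clutter $\C_i := (\C_{n,d} \setminus e_1) \setminus \cdots \setminus e_{i-1}$ precisely when $N_{\C_i}[e_i]$ is a clique in $\C_i$.

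First I would fix a convenient ordering of $T = \{e_1, \ldots, e_m\}$. Writing each $e \in T$ as $e = \{v\} \cup g$ where $g$ is a $(d-2)$-subset of $[n]\setminus\{v\}$, I would order the $e_i$'s so that the associated sets $g_i$ (equivalently, the complements $[n]\setminus e_i$ within some ambient structure) are ordered in a controlled way — for instance, by a reverse-lexicographic or by a linear order on the $(d-2)$-subsets $g_i$ of $[n]\setminus\{v\}$. The idea is to arrange that when we reach $e_i$, all ``earlier'' submaximal circuits through $v$ that could interfere have already been deleted, so that $N_{\C_i}[e_i]$ becomes a clique. Concretely, I expect that after the right ordering, the closed neighborhood $N_{\C_i}[e_i]$ consists of $e_i$ together with the vertices $c$ for which $e_i \cup \{c\}$ has survived all previous deletions, and the surviving circuits among these vertices form a complete subclutter.

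The main step is the verification that $N_{\C_i}[e_i]$ is a clique in $\C_i$. Since $\C_{n,d}$ starts complete, every $d$-subset is initially a circuit; a $d$-subset $F$ fails to be in $\C_i$ only if some $e_j \subset F$ with $j < i$. Thus I must show: for any two vertices $c, c'$ with $e_i \cup \{c\}, e_i \cup \{c'\} \in \C_i$, the $d$-subsets of $N_{\C_i}[e_i]$ are all still circuits, i.e.\ none of them contains a previously-deleted $e_j$. The crux is that every deleted $e_j$ ($j<i$) contains $v$, and $e_i$ also contains $v$; so a $d$-subset $F \subseteq N_{\C_i}[e_i]$ containing some $e_j$ with $j \ne i$ would have to contain two distinct $(d-1)$-subsets through $v$. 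I would argue that the chosen ordering guarantees this cannot happen among the surviving neighbors of $e_i$, typically because $F \subseteq N_{\C_i}[e_i]$ forces $F$ to be ``built around'' $e_i$, and the only $(d-1)$-subsets through $v$ inside such an $F$ are already excluded by the survival of $e_i \cup \{c\}$ and $e_i \cup \{c'\}$.

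The hardest part will be pinning down the ordering precisely and ruling out the interference cleanly — that is, showing that no surviving $d$-subset of $N_{\C_i}[e_i]$ secretly contains one of the earlier deleted $e_j$. This is essentially a combinatorial bookkeeping argument about $(d-1)$-subsets through a common vertex, and the delicate point is that a single $d$-set can contain several distinct $(d-1)$-subsets through $v$ (namely $\binom{d-1}{d-2} = d-1$ of them) only when $d \geq 3$ forces careful case analysis; for $d=2$ the statement reduces to the familiar fact that deleting edges at a vertex of a complete graph keeps the star simplicial. I would handle the general case by induction on $i$, using the explicit description of the surviving neighborhood, and I expect the chosen linear order on the $g_i$ to make the induction step transparent.
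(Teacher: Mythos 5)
Your overall strategy coincides with the paper's: order the elements of $T$, delete them one at a time, and check directly that each $e_i$ has a clique neighborhood in the partially deleted clutter $\mathcal{D}_{i-1} := \left( \C_{n,d} \setminus e_1 \right) \setminus \cdots \setminus e_{i-1}$. But the proposal stops exactly where the content of the lemma begins: you never commit to a specific order, and you never identify what $N_{\mathcal{D}_{i-1}}\left[ e_i \right]$ actually is. The paper takes $v=1$, orders $T$ lexicographically on the increasingly sorted tuples $\left( 1, i_1, \ldots, i_{d-2} \right)$, and proves the precise structural fact $N_{\mathcal{D}_{i-1}}\left[ e_i \right] = e_i \cup \left\{ i_{d-2}+1, \ldots, n \right\}$, i.e.\ the surviving neighbors of $e_i$ are exactly the vertices larger than $\max(e_i)$. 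Once this is known, the clique property is immediate: any $e_k \subseteq F \subseteq N_{\mathcal{D}_{i-1}}\left[ e_i \right]$ with $k \neq i$ must contain a vertex exceeding $\max(e_i)$, hence $e_i \prec e_k$ and $k>i$, so $e_k$ has not yet been deleted and $F$ is still a circuit. Without some statement of this kind, ``the ordering makes the induction step transparent'' is an assertion, not an argument.

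Moreover, the heuristic you do offer for ruling out interference is flawed. You say a bad $d$-subset $F \subseteq N_{\mathcal{D}_{i-1}}\left[ e_i \right]$ would have to contain two distinct $(d-1)$-subsets through $v$, and that these are ``already excluded by the survival of $e_i \cup \{c\}$ and $e_i \cup \{c'\}$.'' Neither half of this works. First, any $d$-subset containing $v$ automatically contains $d-1$ distinct $(d-1)$-subsets through $v$, so their mere multiplicity is no obstruction. Second, the genuinely dangerous $d$-subsets are those of the form $F = \left( e_i \setminus \{w\} \right) \cup \{c, c'\}$ with $w \in e_i \setminus \{v\}$ and $c, c'$ surviving neighbors: such an $F$ contains $(d-1)$-subsets through $v$ like $\left( e_i \setminus \{w\} \right) \cup \{c\}$, which are \emph{not} of the form $e_i \cup \{c\}$, so the survival of $e_i \cup \{c\}$ and $e_i \cup \{c'\}$ says nothing about whether they were deleted earlier. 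Controlling precisely these sets is what the lexicographic order and the explicit description of the neighborhood accomplish in the paper's proof; that is the missing step you would need to supply.
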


\begin{proof}
Without loss of generality we may assume that $v=1$. We define a total order on $T$ as follows: 
\begin{itemize}
\item[] let $e=\{1, i_1, i_2, \ldots, i_{d-2}\}$ and $e'=\{1, j_1, j_2, \ldots, j_{d-2}\}$, where $1<i_1< i_2< \cdots< i_{d-2}$ and $1< j_1< j_2< \cdots< j_{d-2}$. Then $e\prec e'$ if and only if there exists an integer $t$ such that $i_1=j_1, \ldots, i_{t-1}=j_{t-1}$ and $i_t<j_t$.
\end{itemize}
Now, let $T=\{e_1, \ldots, e_m\}$ with $e_1 \prec e_2 \prec \cdots \prec e_m$. Clearly, $e_1=\{1,2, \ldots, d-1\}$, and since $\mathcal{C}_{n,d}$ is $d$-complete we have $N[e_1]=[n]$ which is a clique of $\mathcal{C}_{n,d}$ and so $e_{1} \in \mathrm{Simp}(\mathcal{C}_{n,d})$.

Set $\mathcal{D}_0:=\mathcal{C}_{n,d}$, and for $i \geq 1$, let $\mathcal{D}_i:=\mathcal{D}_{i-1} \setminus e_i$. Let $i>1$ and assume that $e_{i-1} \in \mathrm{Simp} \left( \mathcal{D}_{i-2} \right)$ and  $e_i=\left\{ 1, i_1, \ldots, i_{d-2} \right\} \in \mathrm{SC} \left( \mathcal{D}_{i-1} \right)$ with $1 < i_1 < \cdots < i_{d-2}$. We claim that $N_{\mathcal{D}_{i-1}}\left[ e_i \right]= e_i \cup \left\{i_{d-2}+1, \ldots, n \right\}$.  
\begin{itemize}
\item[] \textit{Proof of the claim.} To show that $\{i_{d-2}+1, \ldots, n\} \subseteq N_{\mathcal{D}_{i-1}} \left[ e_i \right]$, pick an element $j \in \left\{ i_{d-2}+1, \ldots, n \right\}$. We show that $e_i \cup \left\{ j \right\} \in \mathcal{D}_{i-1}$. It will follow that $j \in \mathrm{N}_{\mathcal{D}_{i-1}} \left[ e_i \right]$. Since $e_i \cup \left\{ j \right\} \in \mathcal{C}_{n,d}$, it is enough to prove that $e_k \not\subset e_i \cup \left\{ j \right\}$, for all $1 \leq k \leq i-1$.
Suppose that there exists $1 \leq k \leq m$, $k \neq i$, such that $e_k \subset e_i \cup \left\{j \right\}$. So, $e_k= \left( \left\{ j \right\} \cup e_i \right) \setminus \left\{ i_s \right\}$ for some $1 \leq s \leq d-2$. Since $j > i_l$, for all $1 \leq l \leq d-2$, we have $e_i \prec e_k$. So $k>i$, which implies that $e_k \not\subset e_i \cup \left\{ j \right\}$. Therefore, $e_i \cup \left\{j \right\}\in \mathcal{D}_{i-1}$. 

Conversely, suppose that $j \in N_{\mathcal{D}_{i-1}} \left[ e_i \right] \setminus e_i$. Then $e_i \cup \left\{ j \right\} \in \mathcal{D}_{i-1}$. Thus $e_k \not\subset e_i \cup \left\{ j \right\}$ for all $1 \leq k \leq i-1$. In particular, $e_k \neq \left( \left\{ j \right\} \cup e_i \right) \setminus \left\{ i_l \right\}$, for all $1 \leq l \leq d-2$. Since $i_l \neq 1$, we have $\left( \left\{ j \right\} \cup e_i \right) \setminus \left\{ i_l \right\} \in T$ and so for any $1 \leq l \leq d-2$ there exists $i+1 \leq k_l \leq m$ such that $e_{k_l}= \left( \left\{ j \right\} \cup e_i \right) \setminus \left\{ i_l \right\}$. Since $k_{d-2}>i$, the  order of elements of $T$ implies that  $e_i \prec e_{k_{d-2}}$, and so $j>i_{d-2}$. Hence $ N_{\mathcal{D}_{i-1}} \left[ e_i \right]\setminus e_i \subseteq \{i_{d-2}+1, \ldots, n\}$.
\end{itemize}
Now, we prove that $N_{\mathcal{D}_{i-1}} \left[ e_i \right]$ is a clique in $\mathcal{D}_{i-1}$. Let $F \subseteq N_{\mathcal{D}_{i-1}} \left[ e_i \right]$ with $|F|=d$. We show that $F \in \mathcal{D}_{i-1}$. Since $F \in \mathcal{C}_{n,d}$, it is enough to prove that $F$ does not contain any $e_k$ with $k<i$.

Assume that $e_k\subset F$ for some $k<i$. Since $e_k \subset N_{\mathcal{D}_{i-1}} \left[ e_i \right]$ and $k \neq i$, there exists  $j \in  \left\{ i_{d-2}+1, \ldots, n \right\}$ such that $j\in e_k$. Therefore, $j>i_l$ for all $i_l \in e_i$ and hence $e_i \prec e_k$ which implies that $k>i$. Thus $e_k \not\subset F$ for all $1 \leq k \leq i-1$, as desired. This completes the proof.
\end{proof}

\begin{cor} \label{complete clutters are chordal}
The complete clutter $\C_{n,d}$ is chordal.
\end{cor}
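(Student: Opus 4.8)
The plan is to prove the statement by induction on the number of vertices $n$, using Lemma~\ref{complete} to strip away, one submaximal circuit at a time, all submaximal circuits passing through a single fixed vertex. First I would dispose of the trivial base case: when $n < d$, the clutter $\mathcal{C}_{n,d}$ consists of isolated points, so $\mathcal{C}_{n,d} = \varnothing$ and it is chordal by definition.

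For the inductive step, fix a vertex $v \in [n]$ and let $T = \{e_1, \ldots, e_m\}$ be the set of all $(d-1)$-subsets of $[n]$ containing $v$, ordered exactly as in Lemma~\ref{complete}. That lemma guarantees that each $e_i$ is a simplicial submaximal circuit of $\left( \mathcal{C}_{n,d} \right)_{e_1 \cdots e_{i-1}}$ for $2 \le i \le m$, while its proof also records the computation $N_{\mathcal{C}_{n,d}}[e_1] = [n]$, which shows that $e_1 \in \mathrm{Simp}(\mathcal{C}_{n,d})$. Thus $e_1, \ldots, e_m$ is an admissible sequence of simplicial deletions in the sense of the restated definition of chordality given just after the definition of $\mathfrak{C}_d$.

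Next I would identify the clutter reached after these deletions. Since every $d$-circuit containing $v$ contains at least one $(d-1)$-subset lying in $T$, whereas no circuit avoiding $v$ contains any member of $T$, deleting $e_1, \ldots, e_m$ removes precisely the circuits through $v$ and leaves every $d$-subset of $[n] \setminus \{v\}$ intact. Hence $\left( \mathcal{C}_{n,d} \right)_{e_1 \cdots e_m}$ is, apart from the now-isolated vertex $v$, the complete $d$-uniform clutter on the $(n-1)$-element set $[n] \setminus \{v\}$, i.e. a relabelling of $\mathcal{C}_{n-1,d}$. By the induction hypothesis $\mathcal{C}_{n-1,d} \in \mathfrak{C}_d$, so it admits its own sequence of simplicial submaximal circuit deletions terminating in $\varnothing$; an isolated vertex affects neither the circuits nor the simplicial submaximal circuits, so this sequence remains valid for $\left( \mathcal{C}_{n,d} \right)_{e_1 \cdots e_m}$. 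Concatenating it after $e_1, \ldots, e_m$ exhibits a full sequence of simplicial deletions reducing $\mathcal{C}_{n,d}$ to $\varnothing$, which is exactly the assertion $\mathcal{C}_{n,d} \in \mathfrak{C}_d$.

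The argument is essentially bookkeeping once Lemma~\ref{complete} is available, so I do not expect any genuine obstacle. The only point demanding a little care is the verification that deleting all $(d-1)$-subsets through $v$ produces exactly the complete clutter on $[n] \setminus \{v\}$, together with the (routine) observation that the leftover isolated vertex $v$ is harmless for chordality, so that the inductive clutter really is $\mathcal{C}_{n-1,d}$.
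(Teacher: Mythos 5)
Your proof is correct and follows exactly the route the paper intends: the corollary is stated as an immediate consequence of Lemma~\ref{complete} (the paper omits the proof), namely deleting all submaximal circuits through a fixed vertex to reduce $\C_{n,d}$ to $\C_{n-1,d}$ plus an isolated vertex, and inducting on $n$. Your write-up just makes explicit the bookkeeping the paper leaves to the reader.
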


\begin{prop}[\textbf{E-chordals are chordal}] \label{E-chordals are chordal}
If $\C$ is a $d$-uniform E-chordal clutter, then $\C \in \mathfrak{C}_d$.
\end{prop}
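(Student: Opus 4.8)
The plan is to induct on the number of steps used to build $\C$ via Definition~\ref{Definition of E-chordal}. The base case is clause (i), where $\C=\C_{n,d}$ lies in $\mathfrak{C}_d$ by Corollary~\ref{complete clutters are chordal}. For the inductive step I treat the constructions (ii) and (iii) separately, assuming that the smaller E-chordal clutter $\C$ from which $\C'$ is built already belongs to $\mathfrak{C}_d$. Throughout I use the elementary consequence of the recursive definition of $\mathfrak{C}_d$: if $\C'$ can be reduced to a member of $\mathfrak{C}_d$ by successively deleting simplicial submaximal circuits, then $\C'\in\mathfrak{C}_d$.

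Clause (iii) is immediate. Here $\C'=\C\cup\{F\}$ with $e\subset F$, $|e|=d-1$, and $e\notin\mathrm{SC}(\C)$. Since no circuit of $\C$ contains $e$, the unique circuit of $\C'$ containing $e$ is $F$ itself, whence $N_{\C'}[e]=F$; its only $d$-subset is the circuit $F$, so $N_{\C'}[e]$ is a clique and $e$ is a simplicial submaximal circuit of $\C'$. Moreover $\C'\setminus e=\C$, which is in $\mathfrak{C}_d$ by the induction hypothesis, so $\C'\in\mathfrak{C}_d$.

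Clause (ii) is the real work. Write $\C'=\C\cup_{\C_{i,d}}\C_{n,d}$, let $A$ be the $i$ identified vertices (so that all $d$-subsets of $A$ are circuits of $\C$), let $B$ be the $n-i$ genuinely new vertices of the complete clutter $\C_{n,d}$ on $A\cup B$, and note that every circuit of $\C'$ is either a circuit of $\C$ or a $d$-subset of $A\cup B$, and that \emph{no circuit of $\C$ meets $B$}. I prove by a second induction on $|B|=n-i$ that $\C'\in\mathfrak{C}_d$; when $|B|=0$ we have $\C'=\C\in\mathfrak{C}_d$. For $|B|>0$ fix $w\in B$ and apply Lemma~\ref{complete} to $\C_{n,d}$ with distinguished vertex $w$: this orders the $(d-1)$-subsets $e_1,\ldots,e_m$ of $A\cup B$ containing $w$ so that each $e_j$ is simplicial in $\mathcal{D}_{j-1}:=\C_{n,d}\setminus e_1\setminus\cdots\setminus e_{j-1}$. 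I delete the same sequence from $\C'$ (ignoring any $e_j\notin\mathrm{SC}(\C'_{j-1})$, whose deletion changes nothing) and check that each $e_j$ that is a submaximal circuit of $\C'_{j-1}:=\C'\setminus e_1\setminus\cdots\setminus e_{j-1}$ is simplicial there. Because every $e_k$ contains $w$ while no circuit of $\C$ does, deleting $e_1,\ldots,e_{j-1}$ removes only circuits through $w$; hence the circuits of $\C'_{j-1}$ through $w$ coincide with those of $\mathcal{D}_{j-1}$, which forces $N_{\C'_{j-1}}[e_j]=N_{\mathcal{D}_{j-1}}[e_j]\subseteq A\cup B$. Every $d$-subset $G$ of this neighborhood then lies in $\C'_{j-1}$: if $w\in G$ then $G\in\mathcal{D}_{j-1}$ by simpliciality of $e_j$ in $\mathcal{D}_{j-1}$, and circuits through $w$ agree, so $G\in\C'_{j-1}$; if $w\notin G$ then $G$ survives all deletions $e_k\ni w$ and $G\in\C'_{j-1}$ directly. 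Thus $N_{\C'_{j-1}}[e_j]$ is a clique and $e_j$ is simplicial. Running through $e_1,\ldots,e_m$ deletes every circuit containing $w$, leaving $\C\cup_{\C_{i,d}}\C_{n-1,d}$, which lies in $\mathfrak{C}_d$ by the inner induction; therefore $\C'\in\mathfrak{C}_d$, completing the outer induction.

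The main obstacle is exactly this transfer of simpliciality in clause (ii): a priori the ordering furnished by Lemma~\ref{complete} only controls behaviour inside the complete clutter, and one must argue that gluing $\C$ back on corrupts neither the closed neighborhoods nor the clique condition. The decisive point, used repeatedly above, is that the vertices of $B$ appear in no circuit of $\C$, which localizes all relevant computations to the complete part $\C_{n,d}$ on $A\cup B$ and lets Lemma~\ref{complete} do the work.
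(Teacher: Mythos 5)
Your proposal is correct and follows essentially the same route as the paper: the base case via Corollary~\ref{complete clutters are chordal}, clause (iii) by observing that $e$ is simplicial with $F$ its unique circuit and $\C'\setminus e=\C$, and clause (ii) by choosing a vertex outside $V(\C)$ and transporting the elimination ordering of Lemma~\ref{complete} from $\C_{n,d}$ to the glued clutter. The only difference is that you spell out the transfer of neighborhoods and the clique condition (which the paper asserts in one line), a verification that is welcome but not a departure in method.
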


\begin{proof}
The proof is recursive as in the definition of E-chordal clutters. First note that by Corollary~\ref{complete clutters are chordal}, the clutter $\C$ is chordal if it is a complete clutter. Assume that $\C = \mathcal{D} \cup {F}$, where  $\mathcal{D}$ is E-chordal and $F \subset V \left( \C \right)$ is such that $|F|=d$ and there exists $e \subset F$, $|e|=d-1$, with $e\notin \mathrm{SC} \left( \mathcal{D} \right)$. Since $F$ is the only circuit containing $e$, we conclude that, $e \in \mathrm{Simp} \left( \C \right)$. Moreover, $\C \setminus e =\mathcal{D}$. So induction on the number of circuits of $\C$ shows that $\C \in \mathfrak{C}_d$.

Suppose now that $\C = \mathcal{D} \cup_{\C_{i,d}} \C_{n,d}$. In this case there exists $v \in V \left( \C_{n,d} \right) \setminus V \left( \mathcal{D} \right)$. Without loss of generality we may suppose that $v=n$. Lemma~\ref{complete} implies that, with a suitable ordering of all submaximal circuits of $\C_{n,d}$ containing $v$, say $e_1, \dots, e_m$, we have $e_1 \in \mathrm{Simp} \left( \C_{n,d} \right)$ and  $e_j \in \mathrm{Simp} \left( \C_{n,d} \setminus e_1 \setminus \cdots \setminus e_{j-1} \right)$, for $2 \leq j \leq m$. Since $v \in V \left( \C_{n,d} \right) \setminus V \left( \mathcal{D} \right)$, we have $e_1 \in \mathrm{Simp} \left( \C \right)$ and  $e_j\in \mathrm{Simp} \left( \C \setminus e_1 \setminus \cdots \setminus e_{j-1} \right)$ for $2 \leq j \leq m$. Clearly, $\C \setminus e_1 \setminus \cdots \setminus e_{m} = \mathcal{D} \cup_{\C_{i,d}} \C_{(n-1),d}$ which is E-chordal. Induction, now completes the proof.
\end{proof}

\begin{ex}
The class of $d$-uniform E-chordal clutters are strictly contained in $\mathfrak{C}_d$. To see this, consider the following $3$-uniform clutter $\C$:
\[
\C= \left\{ 123, 124, 134, 235, 245, 345, 125, 135, 145 \right\}.
\]
Then, one may check that $\C$ is not E-chordal, while $\C \in \mathfrak{C}_3$. Also, it is worth to say that, the class of E-chordal and W-chordal are not contained in each other (see \cite[Example 4.8]{Woodroofe}). 
\end{ex}

\subsection{Other known chordalities}
Another notion of chordality, has been defined by Van Tuyl and Villarreal in \cite{VanTuyl-Villarreal}. They called a clutter $\C$ (not necessarily $d$-uniform) chordal, if every minor of $\C$ has a free vertex, that is, a vertex appearing in exactly one circuit of $\C$. Note that a free vertex is obviously W-simplicial and hence a clutter with free vertex property, is W-chordal. Let us denote the class of chordal clutters in the sense of Van Tuyl and  Villarreal, by VTV-chordal. It is worth to note that $\C$ is a VTV-chordal if and only if   $\C = \mathcal{F} (\Delta)$, where $\Delta$ is the clique complex of a chordal graph (\cite[Example 4.5]{Woodroofe}). Hence this class is a small subclass of W-chordal clutters.

Another notion of chordality can be found in \cite{Nevo}, where the authors defined the concept of `resolution $l$-chordal'  for a simplicial complex. Due to \cite[Section 3]{Nevo}, a simplicial complex $\Delta$ is resolution $l$-chordal, if $\tilde{H}_l \left( \Delta_W; K \right) =0$, for every subset $W$ of vertices of $\Delta$. Now, let $\Delta$ be a simplicial complex such that $I_\Delta$ is generated by elements of degree $d$. It is well-known and immediately concluded from Hochster's formula \cite[Theorem 5.1]{Hochster} that $I_\Delta$ has a $d$-linear resolution, if and only if $\tilde{H}_{i} \left( \Delta_W ; K \right) =0$, for every subset $W$ of vertices of $\Delta$ and for all $i \neq d-2$. With the notion as in \cite{Nevo}, this is equivalent to saying that $\Delta$ is resolution $l$-chordal for all $l \neq d-2$. In \cite[Theorem 5.1]{Nevo}, the authors refined this condition with saying that $\Delta$ is resolution $l$-chordal for all integers $l \in [d-1, 2d-3]$. Hence, chordality in \cite{Nevo} may be viewed as a refinement of the Hochster's formula.

Most of the attempts to generalize Fr\"oberg's theorem have been intended to extend the notion of chordal graphs to higher dimensions and to show that, with a new definition, the associated ideal has a linear resolution but not vice versa. But, in the work of Connon and Faridi \cite{ConnonFaridi}, the direction is in a different way. Indeed, the authors defined the concept of `chorded complexes' and showed that, if $I_{\Delta}$ has a linear resolution over any field $K$, then $\Delta$ is chorded \cite[Corollary 6.2]{ConnonFaridi}. Let us denote the chorded class as defined by Connon and Faridi, by CF-chordal. There are examples of CF-chordal complexes, whose associated ideals do not have linear resolution (see e.g. \cite[Example 7.2]{ConnonFaridi}), and obviously such examples do not belong to $\mathfrak{C}_d$, by Theorem~\ref{chordallin}.

\subsection{Conclusion}
Let us denote the class of $d$-uniform clutters whose circuit ideals have a linear resolution over any field by LinRes. By the discussions in this section, we obtain the following diagram:

\begin{center}
\begin{tabular}{r}
 VTV-chordal  $\subsetneq$  W-chordal\\
 E-chordal
\end{tabular}
$\bigg\} \subsetneq  \mathfrak{C}_d \subseteq \mbox{LinRes} \subsetneq \mbox{CF-chordal} $
\end{center}

A C++ program has been prepared to check whether a $d$-uniform clutter belongs to the class ${\mathfrak C}_d$.  Using this program, we checked some strange examples which have linear resolution over any field and we found that all are in the class ${\mathfrak C}_d$. Some of these examples are \cite[Examples 5 and 7]{Hashi} and \cite[Theorem 3.5]{Duval}. The source code of this program and the details of computations can be found in \cite{code}. Motivated by the above diagram and these evidences, we propose the following question:

\begin{question} \label{Characterization question}
Does there exist any $d$-uniform clutter $\C$ such that the ideal $I \left( \bar{\C} \right)$ has a linear resolution over any field, but $\C$ is not in the class $\mathfrak{C}_d$?
\end{question}

\subsection{Linear quotients}
Let $\C \neq \C_{n,d}$ be a $d$-uniform clutter and $I= I \left( \bar{\C} \right)$. If $\C  \in \mathfrak{C}_d$, then by Theorem~\ref{chordallin}, we know that $I$ has a $d$-linear resolution over any field. On the other hand, ideals with linear quotients which are generated in a same degree, have also linear resolutions over any field. So it is natural to ask, whether the ideal associated to a chordal clutter has linear quotients. In the following example we show that the ideals associated to chordal clutters are not contained in the class of ideals with linear quotients. However, it is known that, if $G$ is a chordal graph, then $I \left( \bar{G} \right)$ (and all of its powers) has linear quotients \cite[Corollary 3.2]{HH05}.

\begin{ex}
Let $\Delta$ be a triangulation of the dunce hat with $8$ vertices as shown in Figure~\ref{Dunce hat}, which is  originally introduced by Zeeman \cite{Zeeman}. Let $\C$ be the $5$-uniform clutter $\C= \C_{8, 5} \setminus \mathcal{F} \left( \bar{\Delta} \right)$, where $\bar{\Delta} = \langle [8] \setminus F \colon \quad F \in \mathcal{F} \left( \Delta \right) \rangle$.

\begin{figure}[!htp]
\begin{tikzpicture}[line cap=round,line join=round,>=triangle 45, scale=0.5]
\definecolor{pinky}{rgb}{0.6,0.2,0.}
\fill[color=pinky,fill=pinky,fill opacity=0.1] (8.,6.) -- (3.,-2.) -- (13.,-2.) -- cycle;
\fill[color=pinky,fill=pinky,fill opacity=0.1] (8.,2.) -- (7.,1.) -- (7.46,0.) -- (8.68,0.) -- (9.,1.) -- cycle;
\draw (8.,6.)-- (3.,-2.);
\draw (3.,-2.)-- (13.,-2.);
\draw (13.,-2.)-- (8.,6.);
\draw (8.,2.)-- (7.,1.);
\draw (7.,1.)-- (7.46,0.);
\draw (7.46,0.)-- (8.68,0.);
\draw (8.68,0.)-- (9.,1.);
\draw (9.,1.)-- (8.,2.);
\draw (8.,2.)-- (8.,6.);
\draw (7.,1.)-- (8.,6.);
\draw (7.,1.)-- (6.09,2.9);
\draw (7.,1.)-- (4.8,0.97);
\draw (7.46,0.)-- (3.,-2.);
\draw (4.85,0.97)-- (7.46,0.);
\draw (7.46,0.)-- (6.48,-2.);
\draw (8.68,0.)-- (6.48,-2.);
\draw (8.,2.)-- (7.46,0.);
\draw (8.,2.)-- (8.68,0.);
\draw (8.68,0.)-- (10.,-2.);
\draw (8.68,0.)-- (13.,-2.);
\draw (9.,1.)-- (13.,-2.);
\draw (9.,1.)-- (11.1,0.98);
\draw (9.,1.)-- (9.87,3.0);
\draw (8.,2.)-- (9.87,3.0);
\begin{scriptsize}
\draw [fill](8.,6.) circle (1.5pt);
\draw (8.0,6.3) node {$1$};
\draw [fill] (3.,-2.) circle (1.5pt);
\draw (3.0,-2.35) node {$1$};
\draw [fill] (13.,-2.) circle (1.5pt);
\draw (13.0,-2.35) node {$1$};
\draw [fill] (6.08,2.9) circle (1.5pt);
\draw (5.75,3.0) node {$3$};
\draw [fill] (4.85,0.97) circle (1.5pt);
\draw(4.55,1.07) node {$2$};
\draw [fill] (11.14,0.97) circle (1.5pt);
\draw(11.5,1.07) node {$2$};
\draw [fill] (9.87,3.0) circle (1.5pt);
\draw(10.2,3.1) node {$3$};
\draw [fill] (6.48,-2.) circle (1.5pt);
\draw(6.48,-2.4) node {$3$};
\draw [fill] (10.,-2.) circle (1.5pt);
\draw (10.0,-2.4) node {$2$};
\draw [fill] (8.,2.) circle (1.5pt);
\draw (7.73,2.15) node {$6$};
\draw [fill] (7.,1.) circle (1.5pt);
\draw(6.6,1.23) node {$5$};
\draw [fill] (7.46,0.) circle (1.5pt);
\draw(7.5,-0.4) node {$4$};
\draw [fill] (8.68,0.) circle (1.5pt);
\draw (8.65,-0.45) node {$8$};
\draw [fill] (9.,1.) circle (1.5pt);
\draw (9.4,1.23) node {$7$};
\end{scriptsize}
\end{tikzpicture}
\caption{A triangulation of the dunce hat} \label{Dunce hat}
\end{figure}
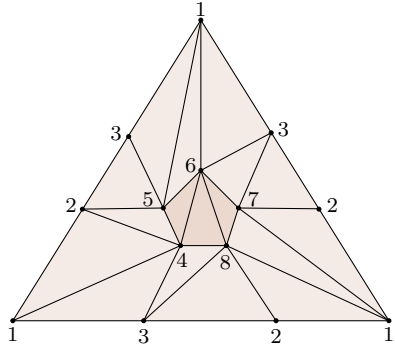
Then $\C$ contains $39$ circuits and $I \left( \bar{\C} \right) = I_{\Delta^\vee}$, the Stanley-Reisner ideal of the Alexander dual of $\Delta$. Since $\Delta$ is not shellable \cite[Section III, p. 84]{Stanley}, the ideal $I:=I \left( \bar{\C} \right)$ does not have linear quotients (\cite[Prop. 8.2.5]{HHBook}). But with the following order on simplicial submaximal circuits of $\C$, we get $\C \in \mathfrak{C}_5$ and hence it has a linear resolution over any field.
\[
\begin{array}{rrrrrrr}
& e_1 = 1237 & e_2= 1234 & e_3= 1235 & e_4= 1236 & e_5= 1467 & e_6= 1468 \\
& e_7 = 1246 & e_8= 1245 & e_9= 1247 & e_{10}= 1256 & e_{11}= 1257 & e_{12}= 1267\\
& e_{13} = 1346 & e_{14}= 1345 & e_{15}= 1347 & e_{16}= 1367 & e_{17}= 1356 & e_{18}= 1357\\
& e_{19} = 1457 & e_{20}= 1567 & e_{21}= 3567 & e_{22}= 2356 & e_{23}= 2578 & e_{24}= 2357 \\
& e_{25} = 2345 & e_{26}= 2347 & e_{27}= 2346 & e_{28}= 2367 & e_{29}= 2457 & e_{30}= 2456 \\
& e_{31} = 3457 & e_{32}= 3467 & e_{33}= 4567 & & &
\end{array}
\]
\end{ex}

\begin{question}
Find a subclass of chordal clutters such that their associated ideal have linear quotients.
\end{question}

\section*{acknowledgement}
The authors would like to express their deepest gratitude to J\"urgen Herzog, for lots of valuable discussions during writing this manuscript. The source code of the program~\cite{code} has been prepared by Iman Kiarazm. The authors also would like to thank him for spending a plenty of time for writing the codes.

\end{document}